\newtheorem{teo}{Theorem}[section]
\newtheorem{prop}[teo]{Proposition}
\newtheorem{ddef}[teo]{Definition}
\newtheorem{cor}[teo]{Corollary}
\newtheorem{lem}[teo]{Lemma}
\newtheorem*{cor*}{Corollary}
\newtheorem*{obss}{Remark}
\newtheorem*{teo*}{Theorem}
\newtheorem{maintheorem}{Theorem}
\newcommand{\co}{\mathbb{C}}
\newcommand{\qe}{\mathbb{Q}}
\newcommand{\ze}{\mathbb{Z}}
\newcommand{\cpt}[1]{\mathbb{C}P^{2}}
\newcommand{\pe}{\mathbb{P}}
\newcommand{\sing}{{\rm Sing}}
\newcommand{\C}{\mathbb{C}}
\newcommand{\DD}{{\mathcal{D}}}
\newcommand{\E}{{\mathcal{E}}}
\newcommand{\F}{\mathcal{F}}
\newcommand{\G}{\mathcal{G}}
\newcommand{\cl}[1]{\mbox{$\mathcal{#1}$}}
\newcommand{\bs}[1]{\mbox{$\boldsymbol{#1}$}}
\newcommand{\sep}{\mbox{\rm Sep}}
\newcommand{\e}{{\rm e}}
\newcommand{\dr}{\mbox{$\partial$}}
\newcommand{\gsv}{\mbox{\rm GSV}}
\newcommand{\bb}{\mbox{\rm BB}}
\newcommand{\cs}{\mbox{\rm CS}}
\begin{document}

\setcounter{section}{0}
\setcounter{teo}{0}
\setcounter{exe}{0}

\author{Gilberto Cuzzuol \&  Rog\'erio   Mol }
\title{ Second Type foliations of codimension one}

% \dedicatory{To Felipe Cano on his   ${\it60}^{th}$ anniversary}

\begin{abstract}
In this article, for holomorphic foliations of codimension one at $(\mathbb{C}^3,0)$, we define the family of second type foliations. This  is formed by  foliations having,
in the reduction process by blow-up maps,  only well oriented singularities,   meaning that the reduction divisor does not contain weak separatrices of saddle-node singularities.
  We   prove that the reduction of
singularities of a non-dicritical foliation of second type coincides with the desingularization of its   set of separatrices.
\end{abstract}

\maketitle

\footnotetext[1]{ {\em 2010 Mathematics Subject Classification:}
 32S65, 37F75}
 \footnotetext[2]{{\em
Keywords.} Holomorphic foliations,  invariant varieties, equidesingularization.}
\footnotetext[3]{First author supported by a CAPES post-doctoral fellowship. Second author supported by Universal/CNPq. }

\section{Introduction}

An important category of problems in the theory of holomorphic foliations involves the study of equisingularity properties.
A question of this type  was addressed in \cite{camacho1984}, where the authors proved
that topological equivalent foliation inside the family of
 generalized curve   foliations are equisingular, meaning that their reductions of singularities
by blow-up maps are combinatorially equivalent. A germ of  foliation  at $(\C^{2},0)$ is said to be a \emph{generalized curve}    if there are no saddle-nodes in its reduction of singularities. In this family, separatrices --- formal invariant curves ---   are all  analytic and  carry an important
volume of topological information of the foliation itself. For instance,
  a generalized curve foliation and its set of separatrices have the same reduction of singularities,
meaning that a sequence of blow-ups that desingularizes all separatrices transforms the foliation into
 one having only simple singularities.
Generalized curve foliations are also
 characterized as those that
minimize Milnor numbers, once an equisingular set of separatrices is fixed.
For instance, when the set of  separatrices  of a foliation $\G$ is finite (the so-called \emph{non-dicritical case}) having $g=0$ as a reduced equation,
then $\G$ is a generalized curve if and only if $\mu_{0}(\G) = \mu_{0}(dg)$, where $\mu$ denotes the Milnor number.

A  quite  natural development is the extension  of the notion of generalized curve  for codimension
one foliations   in  higher dimension   spaces. In ambient dimension three,
 the existence of a
reduction of singularities \cite{cano1992, cano2004} has been the starting point in  \cite{mozo2009} for the de\-fi\-ni\-tion of \emph{generalized surface foliations}, comprising
 non-dicritical foliations (meaning that the divisor in the reduction process is invariant)     without  saddle-nodes in the reduction of singularities.
The main theorem in \cite{mozo2009} asserts that a generalized surface foliation and its set of separatrices have the
same reduction of singularities. We call the attention that, in the universe of dicritical codimension one foliations,
a result of this sort  does not make sense, since there are foliations --- for instance, the celebrated Jouanolou example \cite{jouanolou1979} --- having no separatrix at all.
The notion of generalized surface foliations reappears,  in   arbitrary ambient dimension,  in \cite{cano2015},  receiving the designation of \emph{complex hyperbolic}
foliations, a terminology we would rather adopt.
The  definition now  is  set in terms of two dimensional sections:
a germ of codimension one foliation $\F$ at $(\C^{n},0)$ is   complex hyperbolic if
and only if, for every analytic map $\phi:(\C^{2},0) \to (\C^{n},0)$ generically transversal to $\F$, the pull-back
foliation $\G = \phi^{*}\F$ is a generalized curve foliation.

Returning to dimension two, the conditions   defining
generalized curve foliations  can be slightly weakened,  delimiting
the larger family of  second type foliations. As introduced
 in \cite{mattei2004}, a germ of foliation  at $(\C^{2},0)$ is
of \emph{second type} or in the \emph{second class} if all singularities in its reduction of singularities
are well oriented (Definition \ref{def-well-oriented}). This means precisely  that  saddle-nodes are
admitted in the reduction process, provided they
lie in the regular part of the   divisor, with their weak separatrices transversal to it.
As well
as generalized curve foliations,   second type foliations
 and their sets of separatrices have equivalent reductions of singularities. However,    formal separatrices
 may exist. The minimization property now works for  the algebraic multiplicity.
For instance, if $g=0$ is a reduced equation for the set of separatrices  of a non-dicritical
foliation
$\G$, then $\G$ is second type if and only if $\nu_{0}(\G) = \nu_{0}(dg)$, where $\nu$ stands for
the algebraic multiplicity. These properties are the key ingredients used
in \cite{mol2016} in order to prove
that second type foliations equivalent by $C^{\infty}$ diffeomorphisms are equisingular.

The central objective of this article is to propose an extension of the concept of second type foliations
to foliations of codimension one. As in \cite{mozo2009},
we only work  in the three dimensional case, since we use a reduction of singularities in our definition. Evidently, for $n > 3$, as soon as the existence of a reduction process  for foliations
at $(\C^{n},0)$ is proved, the notion of second type foliations along with most of the results developed in this article can be properly adapted. Now, a germ of codimension one foliation at $(\C^{3},0)$
 is of second type if it admits a reduction of singularities in which all singularities are well oriented with respect
to the divisor (Definitions \ref{well-oriented-3d} and \ref{second-type-3d}).
It turns out that the definition does not depend on the reduction of singularities and that
it   may also be formulated in terms of two-dimensional sections (Proposition \ref{teo-second-type-3d}).
Employing  arguments similar to those   in \cite{mozo2009}, we can prove the following result:

\begin{maintheorem}
\label{theorem-main}
Let $\F$ be a non-dicritical foliation at $(\C^{3},0)$. Suppose that
 $\F$ is second type. Then $\F$ and its set of  separatrices have the same reduction
 of singularities.
\end{maintheorem}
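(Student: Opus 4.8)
The plan is to follow the inductive scheme of \cite{mozo2009}, replacing the control by Milnor numbers (the invariant appropriate for generalized surfaces) with control by the \emph{algebraic multiplicity}, which is the invariant governing the second type condition both in the plane \cite{mattei2004} and, presumably, in the present setting. Write $S = \mathrm{Sep}(\F)$ for the set of separatrices, with reduced equation $g = 0$, and let $\pi : (\tilde M, \tilde D) \to (\C^3,0)$ be a reduction of singularities of $\F$ in which, by hypothesis, every singularity is well oriented (Definition \ref{well-oriented-3d}). The theorem amounts to two assertions: (a) the morphism $\pi$ is an embedded resolution of $S$; and (b) $\pi$ is \emph{minimal} as such, in the sense that every blow-up occurring in $\pi$ is already forced by the desingularization of $S$. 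Granting (a) and (b), the reduction of $\F$ and the desingularization of $S$ coincide up to combinatorial equivalence, which is the statement.

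For the inductive step I would analyze the effect of a single admissible blow-up $\sigma$, with center $Y$ a point or a smooth invariant curve, on the pair $(\F, S)$. The crucial quantitative input is the three-dimensional counterpart of the two-dimensional characterization recorded in the introduction: $\F$ is of second type along $Y$ precisely when the algebraic multiplicity of $\F$ agrees with that of $g$, and this equality is preserved by $\sigma$ on the strict transforms $\tilde \F$ and $\{\tilde g = 0\}$. To extract this from the known two-dimensional theory I would invoke the section characterization of Proposition \ref{teo-second-type-3d}: for a generic transversal $\phi:(\C^2,0)\to(\C^3,0)$ the pull-back $\phi^{*}\F$ is a second type curve foliation whose separatrix set is $\phi^{-1}(S)$, so the equality $\nu(\phi^{*}\F)=\nu(\phi^{*}dg)$ and the planar result of \cite{mattei2004} apply fibrewise. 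Propagating this control along the tower of blow-ups shows that the strict transform of $S$ never acquires a singularity off $\tilde D$ and that its multiplicity drops exactly in step with the reduction of $\F$; this is the content of (a).

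The endgame is the local analysis at the terminal, reduced singularities of $\tilde \F$. At a non-degenerate reduced point the local separatrices are convergent and meet $\tilde D$ in normal crossings, so $\tilde S \cup \tilde D$ is a normal crossings divisor there; at a well-oriented saddle-node the well-orientation hypothesis places the strong separatrix inside $\tilde D$ and forces the weak separatrix to be transversal to $\tilde D$, so the analytic part of $\tilde S$ is again smooth and transversal to the divisor. This yields (a), and (b) follows because a blow-up of $\pi$ that were superfluous for resolving $S$ would leave $\tilde \F$ with a non-simple singularity having the very same separatrices, contradicting the multiplicity equality maintained above. Here the well-oriented hypothesis is indispensable: a badly oriented saddle-node is exactly the phenomenon that would let $\F$ reduce while $S$ remains singular.

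I expect the main obstacle to be the passage from the fibrewise, two-dimensional control to the genuinely three-dimensional statement, because the reduction of $\F$ blows up smooth \emph{curves} as well as points, so the singular locus of $\tilde\F$ is generically one-dimensional and generic plane sections do not see the geometry transverse to these curve-centers. Handling this requires verifying that the second type property, and the attendant multiplicity equality, hold uniformly along each one-dimensional center, and that the formal weak separatrices of saddle-nodes --- which need not contribute to the analytic set $S$ --- are nonetheless transversal to $\tilde D$ and so do not obstruct the normal crossings conclusion. Controlling these formal objects along curves, rather than at isolated points, is the technical heart of the argument.
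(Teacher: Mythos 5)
Your plan identifies the right invariant (algebraic multiplicity, via Corollary \ref{cor-multiplicity} and the section characterization of Proposition \ref{teo-second-type-3d}), but it defers exactly the step that carries the mathematical content, and it runs the argument in the less useful direction. The paper's proof does not induct on the tower of blow-ups and does not propagate a multiplicity equality through strict transforms. It starts from an arbitrary desingularization $\pi$ of $S=\sep_{0}(\F)$ (whose centers are automatically permissible, by non-dicriticality and invariance of $S$) and checks directly, at each point of the final divisor, that $\pi^{*}\F$ is $\DD$-simple: at a regular point of $\DD$ not on $\tilde{S}$ the unique local separatrix is the smooth divisor component itself, so Corollary \ref{cor-multiplicity} gives $\nu=0$ and the point is regular; at every remaining point the local separatrix set consists of two or three smooth surfaces in normal crossings, and one invokes Lemma \ref{lemma-simple3}. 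Your assertion (b) about minimality is both vague (there is no canonical minimal embedded resolution of a surface germ in ambient dimension three) and beside the point: what must be proved is that desingularizing $S$ already reduces $\F$, not that the reduction of $\F$ contains no superfluous blow-ups.

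The genuine gap is the step you yourself flag as ``the technical heart'': converting ``second type with two or three smooth transversal separatrices'' into ``simple'' in ambient dimension three. This is Lemma \ref{lemma-simple3}, and it does not follow from fibrewise planar control alone. Its proof needs two genuinely three-dimensional inputs. For dimensional type two, one uses the triviality of unfoldings of simple planar singularities \cite{mattei1991}, which is what promotes the conclusion of Lemma \ref{lemma-simple2} on a generic section to the ambient foliation. For dimensional type three, the equality $\nu_{0}(\F)=2$ only yields a \emph{pre-simple} singularity in the sense of \cite{cano1992}; one must then combine simplicity at the nearby points of the one-dimensional singular locus (the $s_{0}=2$ case) with Cano--Cerveau's criterion upgrading pre-simple to simple. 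Without these two ingredients your sketch cannot close; ``verifying that the second type property holds uniformly along each one-dimensional center'' is a correct diagnosis of the difficulty, not a resolution of it.
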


This article has the following organization. In Section \ref{section-reduction} we briefly recall
the main aspects of the reduction of singularities for foliations, both  in dimension two
and  three. Next, in Section \ref{section-well-oriented}, we establish the notion of well
oriented singularities in dimension three, define
second type foliations  and prove %, in Theorem \ref{teo-second-type-3d},
that the   definition does not depend on the reduction of singularities and may also be formulated in terms of two dimensional sections. In Section \ref{section-desingularization}, we prove   Theorem \ref{theorem-main}.
Finally, in Section \ref{section-logarithmic}, we give a characterization  of logarithmic foliations
in the projective space $\pe^{3}$
in terms of the notion of second type foliation.

Part of this work was developed during a post-doctoral stage of the first author at Universidad
de Cantabria. The authors would like to express their gratitude to N. Corral.

\section{Simple singularities and reduction of singularities}
\label{section-reduction}
We start by establishing a pattern for the notation to be followed. We use $(u,v)$ for analytic or formal local coordinates at $(\C^{2},0)$
and $(x,y,z)$  at $(\C^{3},0)$. The term foliation is used as a short for   \emph{singular holomorphic foliation of codimension one}. Foliations and normal crossings divisors are denoted, with variations, respectively
by $\G$ and $\E$, in dimension two, and by $\F$ and $\DD$,  in dimension three.
 The  number of local branches of a divisor at a point $p$ is denoted   by
 $\e_{p}(\E) $ or $\e_{p}(\DD)$.   Abusing terminology and notation, an empty  local normal crossings
divisors at a point $p$ is
represented   by the one point set $\{p\}$. For $n=2$ or $3$, as usual,   $\cl{O}_n$ and $\hat{\cl{O}}_n$   denote, respectively, the local rings of analytic functions and of formal functions  $n$ variables.

\subsection{In dimension two}
\label{subsection-tau2}
A germ of holomorphic foliation $\G$ at $(\C^{2},0)$ is defined, in analytic coordinates $(u,v)$,
by an analytic
$1-$form
\begin{equation}
\label{oneform-tau2}
\eta = A(u,v) d u + B(u,v) d v,
\end{equation}
where  $A, B  \in \cl{O}_{2}$ are relatively prime.
A  \textit{separatrix} for  $\cl{G}$ is an invariant formal irreducible curve,
corresponding to an irreducible formal function
$f\in \hat{\cl{O}}_{2}$   satisfying
$$
\eta \wedge df=(fh) du\wedge dv
$$
for some
$h\in \hat{\cl{O}}_{2}$.
The separatrix is said to be {\em analytic}   if
we can take $f, h \in \cl{O}_{2}$.  The set of separatrices of $\G$ is denoted by $\sep_0(\G)$.
Following the usual notation, its \emph{algebraic multiplicity} is $\nu_{0}(\G) = \min\{\nu_{0}(A),\nu_{0}(B)\}$ and
its \emph{Milnor number} is $\mu_{0}(\G) = \dim_{\C} \cl{O}_{2}/(A,B)$.

The foliation $\cl{G}$ is \emph{simple}     if the
 linear part of  $\bs{v} = B(u,v) \dr / \dr u - A(u,v)\dr / \dr v$, vector field dual to $\eta$, is  non-nilpotent and has eigenvalues with quotient outside  $\qe_{>0}$. Simple foliations   admit the following   formal normal forms:
 \begin{itemize}[leftmargin=*]
\item[]
\begin{equation}  \label{a}
\tag{$a$} \eta = uv \left( \lambda_{1} \frac{du}{u} + \lambda_{2} \frac{dv}{v} \right),
\end{equation}
where $\lambda_{1},  \lambda_{2} \in \C^{*}$ and $m_{1} \lambda_1 + m_{2} \lambda_2 \neq 0$
 for every $m_{1}, m_{2} \in \ze_{\geq 0}$ with at least one of them non-zero;
\item[]
\begin{equation}  \label{b1}
\tag{$b1$} \eta = uv \left(  \frac{du}{u} + \varphi(u) \frac{dv}{v} \right),
\end{equation}
where  $\varphi \in \hat{\cl{O}}_{1}$ is a non-unity;
\item[]
\begin{equation} \label{b2}
\tag{$b2$} \eta = uv \left( p_{1} \frac{du}{u} + p_{2} \frac{dv}{v}
+ \varphi(u^{p_{1}} v^{p_{2}}) \frac{dv}{v} \right),
\end{equation}
where $p_{1}, p_{2} \in \ze_{>0}$ and   $\varphi \in \hat{\cl{O}}_{1}$   is a non-unity.
\end{itemize}
Models \eqref{a} and \eqref{b2}  are called {\em non-degenerate} or {\em complex hyperbolic} simple singularities.
Model \eqref{b1}, corresponding to the existence of a zero eigenvalue,  is a \emph{saddle-node} singularity.
For all simple foliations, the separatrix set   $\sep_0(\G)$ is formed by two
transversal   branches, given by $\{u=0\}$ and $\{v=0\}$. In the non-degenerate case, both are analytic.
For a saddle-node, the separatrix corresponding to $\{u=0\}$, associated to the eigenspace of the non-zero eigenvalue,
is analytic and is called {\em strong}. On its turn,   $\{v=0\}$ defines a possibly formal separatrix, called {\em weak}.

Let  $\E$  be normal crossings divisor  at $(\C^{2},0)$
such that either $\e_{0}(\E) = 1$ or $2$. We   say that
 $\G$ is:
 \begin{itemize}
\item $\E$-\emph{regular},    if there
are local analytic coordinates $(u,v)$ at the origin such that $\E \subset \{uv=0\}$
and   $\G: du = 0$;
\item $\E$-\emph{simple},    if $\G$ has a simple singularity
and   $\E \subset \sep_{0}(\G)$.
\end{itemize}
In this context, we say that $\G$ is \emph{adapted} to the divisor $\E$.
The theorem of  reduction of singularities for foliations in dimension two can be stated in the following form:
\begin{teo*}[Reduction of singularities, dimension two  \cite{seidenberg1968,camacho1984}]
 \label{teo-reduction-C2}
 Let $\G$ be a foliation at $(\C^{2},0)$. Then
there exists a proper analytic map, formed by a composition of   blow-up maps,  $\sigma: (\tilde{N},\E) \to
(\co^2,0)$, where $\E = \sigma^{-1}(0)$ is a normal crossings divisor and $\tilde{N}$ is a germ of complex surface around
$\E$, such that all points of   $\E$ are either $\E$-regular or $\E$-simple singularities
 for the transformed foliation $\tilde{\G} = \sigma^{*}\G$.
\end{teo*}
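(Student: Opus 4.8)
The plan is to run the classical Seidenberg induction, using the algebraic multiplicity $\nu_0(\G)$ as the primary invariant and controlling its behaviour under a single blow-up. First I would fix the $1$-form $\eta = A\,du + B\,dv$ as in \eqref{oneform-tau2}, set $\nu = \nu_0(\G)$, and decompose $A = A_\nu + \cdots$ and $B = B_\nu + \cdots$ into homogeneous parts, the dots denoting higher order terms. The object governing the first blow-up $\pi \colon (\tilde N, D) \to (\C^2,0)$ is the homogeneous polynomial of degree $\nu+1$ given by $P_{\nu+1}(u,v) = u\,A_\nu(u,v) + v\,B_\nu(u,v)$. I would compute $\pi^{*}\eta$ in the two standard charts and read off the dichotomy: if $P_{\nu+1} \equiv 0$ the exceptional divisor $D \cong \mathbb{P}^1$ is \emph{dicritical}, that is, generically transverse to $\tilde{\G} = \pi^{*}\G$; if $P_{\nu+1} \not\equiv 0$ then $D$ is invariant (\emph{non-dicritical}), and the singular points of $\tilde{\G}$ on $D$ are exactly the directions $[u:v]$ annihilating $P_{\nu+1}$, at most $\nu+1$ of them.

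Second, I would establish the two quantitative facts on which termination rests. First, that the multiplicity never increases, i.e.\ $\nu_q(\tilde{\G}) \le \nu$ for every $q$ in the fiber over the blown-up point. Second, that a strictly decreasing well-founded invariant exists. In the non-dicritical case the cleanest choice is the Milnor number, for which a blow-up formula expresses $\mu_p(\G)$ as $\sum_{q \in D} \mu_q(\tilde{\G})$ plus a strictly positive term depending only on $\nu$ when $p$ is singular, so that $\sum_q \mu_q(\tilde{\G}) < \mu_p(\G)$ unless $p$ is already simple. In the dicritical case one argues instead with the order of tangency of $\tilde{\G}$ along $D$, or reduces to the non-dicritical situation after one blow-up. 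Packaging these into a lexicographic pair, namely the maximal multiplicity together with the number of non-simple points, yields a quantity in a well-ordered set that strictly drops at each stage, so the process halts.

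Third, once no further blow-up lowers the invariant, I must verify that the surviving points are genuinely $\E$-regular or $\E$-simple. A point with $\nu_q = 0$ is regular. The delicate case is $\nu_q = 1$, where the linear part of the dual vector field $\bs{v} = B\,\dr/\dr u - A\,\dr/\dr v$ must be analysed. If it is non-nilpotent with eigenvalue quotient outside $\qe_{>0}$ the point is already simple, matching one of the models \eqref{a}, \eqref{b1}, \eqref{b2}. If the quotient lies in $\qe_{>0}$, a resonance, or the linear part is nilpotent, one shows by direct computation that a bounded number of additional blow-ups either forces a strict drop of the auxiliary invariant or splits the singularity into simple ones; this is precisely the classical analysis of reduced-but-not-simple points. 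Throughout, I would track the divisor $\E = \sigma^{-1}(0)$, checking that at invariant points $\E \subset \sep_0(\tilde{\G})$, which secures the $\E$-adapted conclusion.

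The main obstacle is this last termination step at multiplicity one: proving that resonant and nilpotent reduced singularities cannot persist indefinitely. The algebraic multiplicity is insensitive there, so the argument needs a finer invariant, for instance a weight attached to the resonance or the tangency order of the weak separatrix, shown to strictly decrease under each blow-up, together with careful bookkeeping of how the eigenvalue quotient transforms. Verifying that this secondary invariant is well-founded, and that it meshes correctly with the dicritical/non-dicritical dichotomy, is the technical heart of the proof.
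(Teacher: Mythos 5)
This statement is not proved in the paper: it is quoted as the classical Seidenberg/Camacho--Lins Neto--Sad desingularization theorem, with the proof delegated to \cite{seidenberg1968,camacho1984}. So the only thing to assess is your sketch on its own terms. Your overall strategy is the standard one (blow up, read the dicritical/non-dicritical dichotomy off the tangent cone $uA_\nu + vB_\nu$, show multiplicity does not increase, find a well-founded invariant that forces termination, then classify the surviving multiplicity-one points), and the first step is correct as you state it.

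There are, however, genuine gaps in the termination argument. First, the correction term in the non-dicritical Milnor-number formula is $\nu^2-\nu-1$, which is \emph{not} ``strictly positive when $p$ is singular'': it equals $-1$ when $\nu=1$. So the total Milnor number strictly drops only when you blow up a point of multiplicity $\nu\ge 2$ (and, separately, one must check the dicritical formula, which differs); your phrasing hides exactly the case where the argument breaks. Second, the proposed lexicographic invariant ``(maximal multiplicity, number of non-simple points)'' does not strictly decrease at each stage: a single blow-up can keep the maximal multiplicity constant while increasing the number of non-simple singular points (up to $\nu+1$ of them can appear on the exceptional line), so this pair is not a valid termination witness; the correct bookkeeping is that the \emph{sum} of Milnor numbers over all singular points drops whenever a point with $\nu\ge 2$ is blown up, which eventually forces $\nu\le 1$ everywhere. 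Third, and most importantly, you explicitly leave open the resolution of multiplicity-one nilpotent and resonant singularities (``the technical heart''), i.e.\ the part of Seidenberg's theorem that actually requires work: for a resonance $\lambda\in\qe_{>0}$ one must track how $\lambda$ transforms under blow-up ($\lambda\mapsto\lambda-1$ at one corner and $\lambda\mapsto\lambda/(1-\lambda)$ type rules at the other) and show the resonance is destroyed in finitely many steps, and for nilpotent linear parts one needs a separate finite case analysis. As written, the proposal is a correct roadmap of the classical proof with its hardest step acknowledged but not carried out, and with two quantitative claims in the termination step stated incorrectly.
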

The map $\sigma$ above is called  {\em reduction of singularities} or   {\em desingularization}  for   $\G$. The concept of simple singularity   well oriented with respect to a divisor was established in \cite{paul1999}. Here is the precise definition:

\begin{ddef}
\label{def-well-oriented}
{\rm
Let $\E$ be a normal crossings divisor and $\G$ be an $\E$-simple germ of foliation  at $(\C^{2},0)$.
We say that $\G$   is
$\E$-\emph{well oriented} in
one of the following cases:
\begin{enumerate}[label={(\roman*)}]
\item $\G$ is a non-degenerate singularity;
\item $\G$ is a saddle-node singularity whose weak separatrix is not contained in $\E$.
\end{enumerate}
Otherwise, we say that $\G$ is an  $\E$-\emph{tangent saddle-node}.
}\end{ddef}
The notion of $\E$-well oriented simple singularity is invariant by blow-ups in the following sense:
if $\sigma$ is a blow-up map at $0 \in \C^{2}$ with exceptional
divisor $E = \sigma^{-1}(0)$,   then
$\tilde{\G} = \sigma^{*} \G$
has two simple singularities, corresponding to
the  two points of intersection, say $p_{1}$ and $p_{2}$, between $E$ and the transforms of the  branches of $\sep_{0}(\G)$. If $\G$ is $\E$-well oriented, then $\tilde{\G}$ is well oriented with respect
to $\tilde{\E} = E \cup \sigma^{*}\E$ at $p_{1}$ and $p_{2}$. On the other hand,
  if $\G$ is an  $\E$-tangent saddle-node  whose weak separatrix lies in a component
 $E_{1} \subset  \E$, then $\tilde{\G}$ has a saddle-node singularity  at $p_{1} = E \cap \sigma^{*}E_{1}$
 whose weak separatrix is contained in $\sigma^{*}E_{1}$, being
 an $\tilde{\E}$-tangent saddle-node, whereas at the other singular point $p_{2} \in E$, we have that $\tilde{\G}$ is simple non-degenerate  and thus $\tilde{\E}$-well oriented.

The concept of well oriented singularities is the basis for the following definition \cite{mattei2004}:

\begin{ddef}
{\rm
A germ of foliation $\G$   at $(\C^{2},0)$ is said to be of \emph{second type} if, given a reduction
process $\sigma: (\tilde{N},\E) \to
(\co^2,0)$,   all singularities of the transformed foliation $\tilde{\G} = \sigma^{*} \G$ are
$\E$-well oriented.
}\end{ddef}
Clearly, the definition does not depend on the reduction of singularities.
Now, for a fixed
 local normal crossing divisors $\E$ at $(\C^{2},0)$, an
 $\E$-\emph{reduction of singularities} is a map $\sigma: (\tilde{N},\E^{\#})\to
(\co^2,0)$, composition of a finite number of blow-ups, such that all points in
$\tilde{\E} = \sigma^{-1}(\E) = \E^{\#}\cup \sigma^{*} \E$ are either $\tilde{\E}$-simple
or $\tilde{\E}$-regular for $\tilde{\G}= \sigma^{*}\G$.
In this case,
we say that $\G$ is
$\E$-\emph{second type}
if all singularities of  $\tilde{\G}$ are  $\tilde{\E}$-well oriented.
 For instance, if $\G$ is a saddle-node singularity, then it
is a second type foliation. However, setting $\E = \sep_{0}(\G)$ or $\E = \{\text{weak separatrix}\}$,
then  $\G$ is not $\E$-second type.

Let $\G$ be a non-dicritical foliation at $(\C^{2},0)$ whose
set of separatrices has $g=0$ as a reduced equation, where $g \in \hat{\cl{O}}_{2}$.
Then $\nu_{0}(\G) \geq \nu_{0}(dg)= \nu_{0}(\sep_{0}(\G)) -1$ and  the equality holds if and only if $\G$ is second type  \cite{mattei2004}.
This property of minimization of the algebraic multiplicity also holds in the dicritical case
and a formulation for it can be seen in \cite{genzmerl2017}.

A second type foliation and its set of separatrices have equivalent reductions of
singularities. This is a straight consequence of the following lemma, which is
a restatement of Lemma 1 in \cite{camacho1984}:
\begin{lem}
\label{lemma-simple2} Let $\G$ be a germ of foliation at $(\C^{2},0)$. Suppose that
\begin{enumerate}[label={(\roman*)}]
\item $\sep_{0}(\G)$ has exactly two transversal branches;
\item $\G$ is second type.
\end{enumerate}
Then $\G$ is simple.
\end{lem}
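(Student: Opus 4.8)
The plan is to bypass a direct induction on blow-ups and instead reduce the statement to the algebraic multiplicity, then read the conclusion off the linear part of the foliation at the origin.

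First I would record that $\sep_0(\G)$, consisting of exactly two branches, is finite, so $\G$ is non-dicritical and the minimization property of the algebraic multiplicity quoted just above applies. Let $g=0$ be a reduced equation of the two transversal branches. Since the branches are smooth and mutually transversal, $g$ is a product of two smooth factors with distinct tangents, so $\nu_{0}(\sep_{0}(\G)) = \nu_{0}(g) = 2$ and hence $\nu_{0}(dg) = \nu_{0}(\sep_{0}(\G)) - 1 = 1$. Because $\G$ is of second type, the equality case holds, giving $\nu_{0}(\G) = \nu_{0}(dg) = 1$. In particular the dual vector field $\bs{v}$ has a nonzero linear part.

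Next I would exploit hypothesis (i) to normalize $\eta$. Choosing coordinates $(u,v)$ so that the two transversal branches are $\{u=0\}$ and $\{v=0\}$, their invariance forces $v \mid A$ and $u \mid B$, so $\eta = v A_{1}\,d u + u B_{1}\,d v$ with $A_{1}, B_{1} \in \cl{O}_{2}$. Thus the linear part of $\bs{v}$ is the \emph{diagonal} field $B_{1}(0)\,u\,\dr/\dr u - A_{1}(0)\,v\,\dr/\dr v$, with eigenvalues $\mu_{1} = B_{1}(0)$ and $\mu_{2} = -A_{1}(0)$ attached respectively to $\{v=0\}$ and $\{u=0\}$. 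Since $\nu_{0}(\G) = 1$, the eigenvalues do not both vanish, and the diagonal form already excludes a nonzero nilpotent linear part. If exactly one eigenvalue vanishes, the singularity has a single nonzero eigenvalue, hence is a saddle-node and therefore simple. If both are nonzero with $\mu_{1}/\mu_{2} \notin \qe_{>0}$, then $\bs{v}$ is non-nilpotent with eigenvalue quotient outside $\qe_{>0}$, so $\G$ is simple (non-degenerate). In either situation we are finished.

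The one remaining case, $\mu_{1},\mu_{2}$ both nonzero with $\mu_{1}/\mu_{2} \in \qe_{>0}$, is the crux, and it is precisely here that the finiteness of $\sep_{0}(\G)$ must be brought to bear, since such a \emph{node} can perfectly well be of second type and so cannot be excluded by (ii) alone. Writing $\mu_{1}:\mu_{2} = p:q$ with $p,q \in \ze_{>0}$ coprime, the two eigenvalues share a sign, so $\bs{v}$ lies in the Poincaré domain and $\G$ is analytically equivalent to its Poincaré--Dulac normal form. That normal form is either linear, namely $p\,u\,\dr/\dr u + q\,v\,\dr/\dr v$, which admits the meromorphic first integral $u^{q}v^{-p}$ and hence infinitely many separatrices; or, when a resonant monomial survives (which happens only for $\min(p,q)=1$ and destroys the invariance of one axis), a resonant model carrying a single separatrix. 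Both alternatives contradict hypothesis (i), which demands exactly two. Therefore $\mu_{1}/\mu_{2} \notin \qe_{>0}$, this case does not occur, and $\G$ is simple. The main obstacle is exactly this last step: one must translate the finiteness of the separatrix set into the non-resonance of the eigenvalue quotient, which is what rules out the node and forces the singularity to be already reduced.
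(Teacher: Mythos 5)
Your proof is correct and follows essentially the same route as the paper's: use the second type hypothesis to force $\nu_{0}(\G)=1$, normalize so the two transversal branches are the axes (which makes the linear part diagonal), and then exclude the node case $\mu_{1}/\mu_{2}\in\qe_{>0}$ by observing that it would yield either one or infinitely many separatrices, contradicting hypothesis (i). The only difference is that you spell out the Poincar\'e--Dulac dichotomy that the paper states as a parenthetical assertion (and note that the normalizing coordinates, hence $A_{1},B_{1}$, should be taken formal, in $\hat{\cl{O}}_{2}$, since the branches need only be formal).
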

\begin{proof} The proof is essentially that   of \cite{camacho1984}.
We take formal coordinates $(u,v)$ such that $\sep_{0}(\G) = \{uv=0\}$, implying that
$\G$ is given by a   $1-$form of the kind $\eta = v \tilde{a} du + u \tilde{b} dv$,
where $\tilde{a}, \tilde{b} \in \hat{\cl{O}}_{2}$.
Since $\G$ is second type, $\nu_{0}(\G) = \nu_{0}(\sep_{0}(\G)) - 1 = 1$.
Thus, the linear part of $\eta$ is $ \lambda_{1} v du + \lambda_{2} u dv$, with
$\lambda_{1},  \lambda_{2}$ not both zero. If both are non-zero, it suffices
to see that we cannot have  $\lambda_{1}/  \lambda_{2} \in \qe_{<0}$. Actually,
if this happens, either  $\sep_{0}(\G)$ has a unique branch ($\G$ non-linearizable with either $\lambda_{1}/  \lambda_{2} \in \ze_{<0}$
or $\lambda_{2}/  \lambda_{1} \in \ze_{<0}$) or  $\sep_{0}(\G)$ has infinitely many
branches (all other cases).
\end{proof}

\subsection{In dimension three}
\label{subsection-reduction-3d}
A germ of codimension one holomorphic foliation $\F$ at $(\C^{3},0)$ is defined, in analytic coordinates $(x,y,z)$,
by an analytic
$1-$form
\begin{equation}
\label{oneform-tau3}
\omega = A(x,y,z) d x + B(x,y,z) d y + C(x,y,z)d z,
\end{equation}
where  $A, B, C  \in \cl{O}_{3}$ are without common factors, satisfying the integrability condition
$\omega \wedge d \omega = 0$.
A
\textit{separatrix} for  $\cl{F}$ is an invariant formal irreducible surface, that
is, an object given by an irreducible formal function
$f\in \hat{\cl{O}}_{3}$   such that
$ \omega\wedge df=(fh) \theta$
for some
$h\in \hat{\cl{O}}_{3}$ and some formal $2-$form $\theta$. We also have the obvious notion of {\em analytic} separatrix.
The set of separatrices of $\F$ is again denoted by $\sep_0(\F)$.

The \emph{dimensional type} of a foliation $\F$, denoted by $\tau_{0}(\F)$ or shortly by $\tau$, is the
smallest number of variables needed to express its defining equation
in some system of analytic coordinates. Thus, $\tau_{0}(\F) = 2$
if and only if there are analytic coordinates $(x,y,z)$ under which $\F$  is an analytic cylinder  over a singular     foliation in the
  coordinates $(x,y)$. Note that $\tau_{0}(\F) = 1$ if and only if $\F$  regular.

Let us list the simple formal models for singularities of a foliation $\F$  at $(\C^{3},0)$. If
$\tau_{0}(\F) = 2$,  they have already been listed in   Subsection \ref{subsection-tau2}.
If $\tau_{0}(\F) = 3$, we say that $\F$ is \emph{simple} if there are formal coordinates
 $(x,y,z)$ in which $\F$ is expressed in one of the following models:

\begin{itemize}[leftmargin=*]
\item[]
\begin{equation}  \label{A}
\tag{$A$} \omega=xyz\left( \lambda_1\dfrac{dx}{x} + \lambda_2 \dfrac{dy}{y} + \lambda_3\dfrac{dz}{z}\right),
\end{equation}
where $\lambda_1, \lambda_2, \lambda_3 \in \C^{*}$ are such that $m_1\lambda_1+m_2\lambda_2+m_3\lambda_3\neq 0$ whenever $m_1,m_2,m_3\in\mathbb{Z}_{\geq 0}$ with at least one of them non-zero;

\item[]
\begin{equation}  \label{B1}
\tag{$B1$}  \omega= xyz\left( p_1\dfrac{dx}{x}+\varphi(x^{p_1})\left( \lambda_2\dfrac{dy}{y}+\lambda_3\dfrac{dz}{z}\right)\right),
\end{equation}
where $p_1\in\mathbb{Z}_{>0}$,  $\varphi \in \hat{\cl{O}}_{1}$ is a non-unity and $\lambda_2, \lambda_3 \in \C^{*}$  satisfy $m_2\lambda_2+m_3\lambda_3\neq 0$ whenever $m_2,m_3\in\mathbb{Z}_{\geq 0}$ with at least one of them non-zero;

\item[]
\begin{equation}  \label{B2}
\tag{$B2$}
\omega= xyz\left( p_1\dfrac{dx}{x}+p_2\dfrac{dy}{y}+\varphi(x^{p_1}y^{p_2})\left( \lambda_2\dfrac{dy}{y}+\lambda_3\dfrac{dz}{z}\right)\right),
\end{equation}
where $p_1,p_2\in\mathbb{Z}_{>0}$,    $\varphi \in \hat{\cl{O}}_{1}$ is a non-unity and $\lambda_2, \lambda_3 \in \C^{*}$  satisfy $m_2\lambda_2+m_3\lambda_3\neq 0$ whenever $m_2,m_3\in\mathbb{Z}_{\geq 0}$ with at least one of them non-zero;

\item[]
\begin{equation}  \label{B3}
\tag{$B3$}
\omega=xyz\left( p_1\dfrac{dx}{x}+p_2\dfrac{dy}{y}+p_3\dfrac{dz}{z}+\varphi(x^{p_1}y^{p_2} z^{p_{3}}) \left(\lambda_2\dfrac{dy}{y}+\lambda_3\dfrac{dz}{z}\right)\right),
\end{equation}
where $p_1,p_2,p_3\in \mathbb{Z}_{>0}$, $\varphi \in \hat{\cl{O}}_{1}$ is a non-unity and $\lambda_2, \lambda_3 \in \C^{*}$  satisfy $m_2\lambda_2+m_3\lambda_3\neq 0$ whenever $m_2,m_3\in\mathbb{Z}_{\geq 0}$ with at least one of them non-zero.

\end{itemize}

Foliations of formal models \eqref{A}   or \eqref{B3}  are said to be simple  \emph{complex hyperbolic},
whereas those corresponding to models \eqref{B1} or \eqref{B2} are said to be \emph{saddle-nodes}.
In all of them, the singular set is formed by pairwise transversal   analytic curves corresponding to the coordinate axis.  Outside the origin, the
singularities are of dimensional type two. The set of separatrices is precisely the union of the
three coordinate planes.

For the complex hyperbolic models, \eqref{A} and \eqref{B3},
the transversal type along all coordinate axes is complex hyperbolic and,
furthermore, all separatrices are analytic.
Concerning simple saddle-node models, we can say the following:

\begin{itemize}[leftmargin=*]
\item In model \eqref{B1}, the transversal model along the $x$-axis is complex hyperbolic.
The $y$-axis and the $z$-axis have
  transversal models of  saddle-node type. For a transversal section  $y = c$, $c \neq 0$, the weak separatrix is contained in
$z = 0$ and the strong separatrix is in $x  = 0$.
On the other hand, for a transversal section  $z = c$, $c \neq 0$, the weak separatrix is contained in
$y = 0$ and the strong separatrix is in $x = 0$.

\item In model \eqref{B2},  the $z$-axis has  complex hyperbolic    transversal model, whereas
both  the $x$-axis and the $y$-axis have
transversal models  of saddle-node type. For a transversal section  $x = c$, $c \neq 0$, the weak separatrix is contained in
$z = 0$ and the strong separatrix is in $y  = 0$.
For a transversal section  $y = c$, $c \neq 0$, the weak separatrix is contained in
$z = 0$ and the strong separatrix is in $x  = 0$.
\end{itemize}

\vspace{1cm}
\begin{figure}[h]
\begin{center}
\includegraphics[height=4cm,width=12cm]{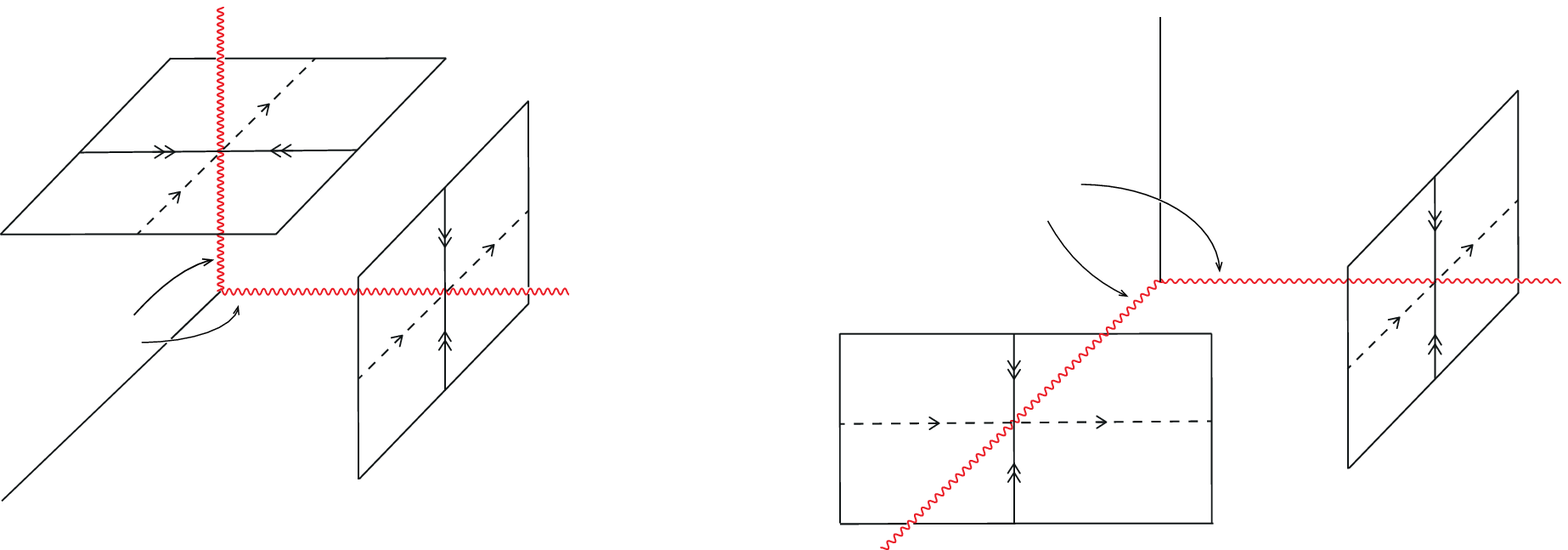}
\begin{picture}(0,0)
%\graphpaper[10](-350,0)(300,250)
\put(-350,130){\footnotesize {Type \eqref{B1}}}
\put(-140,130){\footnotesize {Type \eqref{B2}}}
\put(-167,74){\footnotesize{ \textit{saddle-nodes,}}}
\put(-153,65){\footnotesize{ $\tau=2$}}
\put(-372,45){\footnotesize{ \textit{saddle-nodes,}}}
\put(-357,36){\footnotesize{ $\tau=2$}}
\put(-343,4){\footnotesize {$x$}}
\put(-151,-6){\footnotesize {$x$}}
\put(-223,47.5){\footnotesize{ $y$}}
\put(-4,50){\footnotesize {$y$}}
\put(-93,112){\footnotesize{ $z$}}
\put(-295,114.5){\footnotesize {$z$}}
\put(-370,-18){\footnotesize{weak separatrices: $\{y=0\}$ and $\{z=0\}$}}
\put(-124,-18){\footnotesize{weak separatrix: $\{z=0\}$}}
\end{picture}
\end{center}
\vspace{1cm}
\end{figure}

 This discussion founds the following definition:
\begin{ddef}{\rm  Let $\F$ be a simple foliation at $(\C^{3},0)$ of saddle-node type. We say that a germ of separatrix of $\F$   is
\emph{weak}   if
it contains a component $\Lambda$ of $\sing_{0}(\F)$ with saddle-node transversal type such that,
 for every $p \in \Lambda \setminus \{0\}$ and every two-dimensional section transversal to
$\Lambda$ at $p$, $\Lambda \cap \Sigma$ is the weak separatrix of $\F|_{\Sigma}$.
}\end{ddef}

According to this definition, a saddle-node singularity of model  \eqref{B1}
has two  weak separatrices, corresponding to
the planes $y = 0$ and $z = 0$. On its turn,
  a saddle-node   of model  \eqref{B2} has a unique  weak separatrix, corresponding to $z = 0$.
In dimensional type two, the notion of weak separatrix is clear.

At $(\C^{3},0)$, let $\F$ be a foliation defined by an integrable $1-$form $\omega$ and  $\Gamma$ be a germ of formal curve  with Puiseux parametrization
$\gamma(t)$.  We say that $\Gamma$ is \emph{invariant} by $\F$ if $\gamma^{*} \omega \equiv 0$. This
notion evidently does not depend on the choices made. For later use, we make explicit the following  simple fact:
\begin{lem}
\label{invariant-curve}
Let $\F$ be a simple foliation at $(\C^{3},0)$. Then all formal curves
invariant by $\F$ are contained in $\sep_{0}(\F)$.
\end{lem}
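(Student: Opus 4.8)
The plan is to argue by contradiction, showing that an invariant formal curve which is not contained in any coordinate hyperplane cannot exist. Since a formal curve is invariant (resp. contained in $\sep_{0}(\F)$) if and only if each of its irreducible branches is, I may assume $\Gamma$ irreducible and take a uniformizing Puiseux parametrization $\gamma(t)=(x_{1}(t),x_{2}(t),x_{3}(t))$, writing $(x_{1},x_{2},x_{3})=(x,y,z)$, with each $x_{i}(t)\in\hat{\cl{O}}_{1}$ an honest power series in $t$. For every simple model the separatrix set $\sep_{0}(\F)$ is exactly the union of the coordinate planes (the three planes $\{x_{i}=0\}$ when $\tau_{0}(\F)=3$, and the two planes $\{x=0\}$, $\{y=0\}$ in the cylindrical case $\tau_{0}(\F)=2$). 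Hence $\Gamma\subset\sep_{0}(\F)$ as soon as one coordinate vanishes identically, and it suffices to rule out the situation in which $x(t),y(t),z(t)$ are all nonzero. As $\Gamma$ passes through the origin, the order $\alpha_{i}=\mathrm{ord}_{t}\,x_{i}(t)$ of each non-vanishing coordinate is then a \emph{positive integer}.

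The key observation is that every simple model can be put in logarithmic shape $\omega=x_{1}x_{2}x_{3}\,\Omega$ (with $x_{3}$ absent when $\tau_{0}(\F)=2$), where
\[
\Omega=\sum_{i}a_{i}(x,y,z)\,\frac{dx_{i}}{x_{i}},\qquad a_{i}=\ell_{i}+\lambda_{i}\,\varphi(\text{monomial})\ \ (\text{the }\varphi\text{-term possibly absent}),
\]
and $\ell_{i}=a_{i}(0)$. Reading off the four three-variable models gives $(\ell_{1},\ell_{2},\ell_{3})=(\lambda_{1},\lambda_{2},\lambda_{3})$ in case \eqref{A}, $(p_{1},0,0)$ in case \eqref{B1}, $(p_{1},p_{2},0)$ in case \eqref{B2}, and $(p_{1},p_{2},p_{3})$ in case \eqref{B3}; the two-variable models \eqref{a}, \eqref{b1}, \eqref{b2} give $(\lambda_{1},\lambda_{2})$, $(1,0)$, $(p_{1},p_{2})$. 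Because all coordinates are nonzero I may factor out the monomial and write
\[
\gamma^{*}\omega=x(t)y(t)z(t)\,R(t)\,dt,\qquad R(t)=\sum_{i}a_{i}(\gamma(t))\,\frac{x_{i}'(t)}{x_{i}(t)} .
\]

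The next step is to compute the residue of the Laurent series $R(t)$, i.e. its coefficient of $t^{-1}$. Each $\varphi$ is a non-unit evaluated at a monomial of positive order, so $a_{i}(\gamma(t))-\ell_{i}$ has order $\geq 1$; combined with $x_{i}'/x_{i}=\alpha_{i}/t+O(1)$, the summand $a_{i}(\gamma(t))\,x_{i}'/x_{i}$ has residue exactly $\ell_{i}\alpha_{i}$, the $\varphi$-part contributing nothing. Therefore $\mathrm{Res}_{t=0}R=\ell_{1}\alpha_{1}+\ell_{2}\alpha_{2}+\ell_{3}\alpha_{3}$. In cases \eqref{B1}, \eqref{B2}, \eqref{B3} this is a strictly positive sum of products of positive integers, hence nonzero; in case \eqref{A} it equals $\lambda_{1}\alpha_{1}+\lambda_{2}\alpha_{2}+\lambda_{3}\alpha_{3}$, which is nonzero by the non-resonance condition $m_{1}\lambda_{1}+m_{2}\lambda_{2}+m_{3}\lambda_{3}\neq 0$ applied to $m_{i}=\alpha_{i}\in\ze_{\geq 0}$ (not all zero). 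In every case $R(t)\not\equiv 0$, so $\gamma^{*}\omega\not\equiv 0$ since $x y z\not\equiv 0$ in the domain of formal Laurent series, contradicting the invariance of $\Gamma$. The two-variable models are handled identically with $z$ absent and residue $\ell_{1}\alpha_{1}+\ell_{2}\alpha_{2}$. Consequently some coordinate must vanish identically and $\Gamma\subset\sep_{0}(\F)$.

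I expect the only delicate point to be the saddle-node models \eqref{B1}, \eqref{B2} (and \eqref{b1}, \eqref{b2}), where the coefficients $a_{i}$ are genuinely non-constant: one must verify that the factors $\varphi(\cdot)$, which vanish at the origin, do not disturb the residue, so that $\mathrm{Res}_{t=0}R$ collapses to the positive integer combination $\sum_{i}\ell_{i}\alpha_{i}$. Once this is established, the strict positivity of that combination (or non-resonance in case \eqref{A}) closes all cases uniformly.
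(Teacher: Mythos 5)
Your proposal is correct and follows essentially the same route as the paper: parametrize a putative invariant curve off the coordinate planes, pull back the logarithmic $1$-form, and show the residue at $t=0$ is $\sum_i \ell_i\alpha_i$ (the $\varphi$-terms contributing nothing), which is nonzero by positivity of the $p_i$'s in the saddle-node models and by non-resonance in the complex hyperbolic ones. The only difference is cosmetic: you treat all models, including the dimensional-type-two ones, through a single uniform residue formula, whereas the paper computes model \eqref{B1} explicitly and declares the remaining cases analogous.
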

\begin{proof} The result is clear if $\tau_{0}(\F) = 2$.
If $\tau_{0}(\F) = 3$, it follows from straight calculations
with each simple formal model.
If the $\F$-invariant curve lies outside the coordinate planes, then
it has a parametrization $\gamma(t) =
(\gamma_{1},\gamma_{2}, \gamma_{3}) = (t^{n_{1}}\tilde{\gamma}_{1},t^{n_{2}} \tilde{\gamma}_{2}, t^{n_{3}}\tilde{\gamma}_{3})$,
where $n_{1}, n_{2}, n_{3} \in \ze_{>0}$ and $\tilde{\gamma}_{1}, \tilde{\gamma}_{2}, \tilde{\gamma}_{3} \in \hat{\cl{O}}_{1}$ are unities.
For model \eqref{B1}, for instance,
\[ \gamma^{*} \omega =   \gamma_{1} \gamma_{2} \gamma_{3}
\left[ p_{1} n_{1} \frac{dt}{t} + p_{1}   \frac{d\tilde{\gamma}_{1} }{\tilde{\gamma}_{1}}
+ \varphi( \gamma_{1}^{p_{1}})
\left( (\lambda_{2}n_{2} +  \lambda_{3}n_{3}) \frac{dt}{t} +
\lambda_{2} \frac{d\tilde{\gamma}_{2} }{\tilde{\gamma}_{2}} +
\lambda_{3}\frac{d\tilde{\gamma}_{3} }{\tilde{\gamma}_{3}} \right) \right]
\]
is identically zero, so the residue $p_{1}n_{1}$ of the formal meromorphic $1-$form
inside brackets is also zero, which is absurd.
The argument is the same for the other models.
\end{proof}

 Suppose that $\F$ is a foliation having
either a simple singularity or a non-singular point   at $0 \in \C^{3}$.
Let $\DD$ be a normal crossings divisor.
We decompose $\DD = \DD' \cup  \DD^{*}$, where $\DD'$ assembles
the $\F$-invariant components, called \emph{non-dicritical},  and $\DD^{*}$ the non-invariant ones, called \emph{dicritical}.
We say that
$\F$ is  \emph{adapted} to $\DD$ if:
\begin{itemize}
\item  $\tau_{0}(\F) -1 \leq \e_{0}(\DD') \leq \tau_{0}(\F)$;
\item  $\e_{0}(\DD^{*}) \leq 3 - \tau_{0}(\F)$.
\end{itemize}
In analogy with the two dimensional case, a foliation $\F$ adapted to a divisor $\DD$ can be:
\begin{itemize}
\item \emph{$\F$-simple}, if either $\tau_{0}(\F) = 3$ or $\tau_{0}(\F) = 2$ and there are local coordinates
 $(x,y,z)$ such that $\F$ is expressed in the variables $(x,y)$ and $\DD^{*} \subset \{z=0\}$;
\item \emph{$\F$-regular},  if   there are local coordinates
 $(x,y,z)$ such that $\tilde{\F}$ is given by $dx=0$   and $\DD^{*} \subset \{yz=0\}$;
\end{itemize}

In dimension three, the existence of a reduction of singularities is stated in the following way:
\begin{teo*}[Reduction of singularities, dimension three  \cite{cano1992,cano2004}]
 \label{teo-reduction}
Let $\F$ be a holomorphic singular foliation of codimension one at $(\C^{3},0)$. Then
 there is an analytic map $\pi: (\tilde{M}, \DD) \to (\C^{3}, \sing(\F))$ formed by the
composition of a finite sequence of blow-up maps  with non-singular centers such
that:
\begin{enumerate}
\item at each intermediate step, the blow-up  center is invariant for the corresponding transformed foliation and has normal crossings with the corresponding blow-up divisor;
\item  the transformed foliation $\tilde{\F} = \pi^{*} \F$ is such that
all points in $\DD$ are either     $\DD$-simple or  $\DD$-regular.
\end{enumerate}
\end{teo*}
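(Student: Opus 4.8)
The plan is to follow the Hironaka-style strategy of reduction by induction on a local invariant, a far more delicate analogue of the two-dimensional Seidenberg argument already invoked above. First I would attach to each point $p$ of the successive divisors a local invariant measuring how far $\tilde{\F}$ is from being $\DD$-simple or $\DD$-regular there. The natural leading term is the adapted algebraic multiplicity $\nu_{p}(\tilde{\F})$ (the order of a local defining $1$-form, counted relative to the invariant components of the divisor), refined by the dimensional type $\tau_{p}(\tilde{\F})$ and by Newton/characteristic-polygon data that detect the ``bad'' tangency directions. The invariant must take values in a well-ordered set and be upper semicontinuous, so that its maximal locus is a well-defined closed analytic subset of the ambient germ.

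Next I would single out the admissible centers: non-singular analytic subvarieties $Y$ (points or curves) that are invariant for the foliation and have normal crossings with the current divisor, chosen to lie in the maximal locus of the invariant (a maximal-contact condition). Blowing up such a center produces a new foliation $\pi^{*}\tilde{\F}$ and a new normal crossings divisor, and by this very choice condition (1) of the statement is preserved at every intermediate step. The heart of the proof is then a two-part claim: (i) under an admissible blow-up the invariant does not increase at the infinitely near points; and (ii) there is no infinite sequence of admissible blow-ups along which the invariant stays constant.

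Part (ii), termination, is the main obstacle, and it is precisely the genuine content of the cited works. Unlike the resolution of plane curves or the planar case of the theorem above, in ambient dimension three there is no elementary monomialization argument: one must simultaneously control the creation of saddle-nodes, the behaviour along the dicritical components $\DD^{*}$, and the descent of the dimensional type to the value $\tau_{0} = 2$, where $\tilde{\F}$ becomes an analytic cylinder over a surface foliation and the two-dimensional reduction theorem of Seidenberg can be applied fibrewise. Ruling out infinite equireduction sequences requires the delicate combinatorial and analytic analysis of \cite{cano1992, cano2004}, and this is where the real difficulty lies.

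Finally, once the invariant has been driven down to its minimal value at every point of the divisor, I would verify that the only remaining local models are exactly the $\DD$-simple and $\DD$-regular ones classified earlier, namely \eqref{A}--\eqref{B3} together with their $\tau_{0} = 2$ counterparts, which yields conclusion (2). The finiteness of the whole procedure then follows formally from the well-ordering of the invariant combined with the termination statement (ii).
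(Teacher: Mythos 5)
This statement is not proved in the paper: it is quoted from \cite{cano1992,cano2004} as an external input, so there is no internal argument to compare yours against. Judged on its own terms, your proposal is a reasonable description of the general shape of a Hironaka-type resolution argument, but it is not a proof: you explicitly defer the two claims that carry all of the content --- the non-increase of the invariant under admissible blow-up and, above all, the termination of equireduction sequences --- to ``the delicate combinatorial and analytic analysis'' of the cited works. An argument that identifies its central difficulty and then cites it away has a gap exactly the size of the theorem.

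Concretely, what is missing includes: (a) the actual construction of the local invariant (adapted order, dimensional type, characteristic polygon data) together with the verification that it is upper semicontinuous and valued in a well-ordered set, none of which is automatic for integrable $1$-forms where orders must be computed relative to the invariant part of the divisor; (b) the existence of centers of maximal contact and the proof that blowing them up does not increase the invariant, which in ambient dimension three already requires the full theory of adapted Newton polygons; (c) the two-stage structure of the actual proof --- first reach \emph{pre-simple} points (the notion the paper itself borrows from \cite{cano1992} in the proof of Lemma \ref{lemma-simple3}), then pass from pre-simple to simple --- which your outline does not reflect; and (d) the globalization step: local uniformization does not formally yield a global composition of blow-ups with non-singular invariant centers having normal crossings with the accumulated divisor, and patching the local strategies together is a substantial part of \cite{cano2004}. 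If the intention is merely to record that the theorem is imported from the literature, the citation alone suffices; if the intention is to prove it, essentially everything remains to be done.
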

Following the local picture, we decompose that reduction divisor in non-dicritical and dicritical components, getting  $\DD = \DD' \cup  \DD^{*}$.
The foliation is said to be \emph{non-dicritical} if $\DD = \DD'$, that is, if the reduction divisor
is $\tilde{\F}$-invariant. Finally, singularities   $p \in \sing(\tilde{\F})$  are classified  in two groups:
\begin{itemize}
\item \emph{simple corners}, if $\e_{p}(D') =  \tau_{p}(\tilde{\F})$ (local separatrices contained
in $\DD$);
\item  \emph{trace singularities}, if $\e_{p}(D') =  \tau_{p}(\tilde{\F}) -1 $  (there is one local separatrix outside $\DD$).
\end{itemize}

%%%%%%%%%%%%%%%%%%%%%%%%%%%%%%%%%%%%%%%%%%%%%%%%%%%%%%%%%%%%%%%%%%%%%%%%%%%%%%%%%%%%%%
%%%%%%%%%%%%%%%%%%%%%%%%%%%%%%%%%%%%%%%%%%%%%%%%%%%%%%%%%%%%%%%%%%%%%%%%%%%%%%%%%%%%%%
%%%%%%%%%%%%%%%%%%%%%%%%%%%%%%%%%%%%%%%%%%%%%%%%%%%%%%%%%%%%%%%%%%%%%%%%%%%%%%%%%%%%%%

\section{Second type foliations in dimension three}
\label{section-well-oriented}

The definitions of well oriented singularities and tangent saddle-nodes for   foliations
in dimension three is   analogous to those in dimension two:

\begin{ddef}
\label{well-oriented-3d}
 {\rm Let $\DD$ be a normal crossings divisor and  $\F$ be a germ of $\DD$-simple foliation  at $(\mathbb{C}^3,0)$.
The foliation $\F$   is said to be
$\DD$-\emph{well oriented} in
one of the following cases:
\begin{enumerate}[label={(\roman*)}]
\item $\F$ is simple complex hyperbolic singularity (types \eqref{A} or \eqref{B3});
\item $\F$ is a saddle-node singularity having no weak separatrix   contained in $\DD$.
\end{enumerate}
Otherwise, we say that $\F$ is a   $\DD$-\emph{tangent saddle-node}.
}\end{ddef}

Let us identify the tangent saddle-nodes. We have a $\DD$-simple foliation $\F$, for a normal crossings divisor $\DD$ that has a decomposition
 $\DD = \DD' \cup \DD^{*}$  in non-dicritical and dicritical components.
Suppose that     $\tau_{0}(\F) = 3$,  so that $\sep_{0}(\F)$ has three local branches
corres\-pon\-ding, in the formal models in Subsection \ref{subsection-reduction-3d}, to the three coordinate planes.  In this case,
$\e_{0}(\DD) = \e_{0}(\DD') = 2$ or  $3$. Thus, we have the following possibilities for a
$\DD$-tangent saddle-node:
\begin{enumerate}[label=(\roman*)]
\item  $\e_{0}(\DD') = 2$ and $\F$ is either a saddle-node of model \eqref{B1}
or a saddle-node of model \eqref{B2}  having its weak separatrix in $\DD$;
\item $\e_{0}(\DD') = 3$ and $\F$ is a saddle-node, models \eqref{B1}   or \eqref{B2}.
\end{enumerate}
When $\tau_{0}(\F) = 2$, the picture is essentially two-dimensional: $\sep_{0}(\F)$ has two local branches
crossing normally,   and $\e_{0}(\DD') = 1$ or  $2$.
A  $\DD$-tangent saddle-node corresponds to  the following cases:
\begin{enumerate}[label=(\roman*)]
\item  $\e_{0}(\DD') = 1$ and $\F$ is a saddle-node having its weak separatrix in $\DD$;
\item $\e_{0}(\DD') = 2$ and $\F$ is a saddle-node.
\end{enumerate}

We need the following definition:
\begin{ddef}
{\rm Let $\F$ be a germ of foliation at $(\C^{3},0)$ defined by $\omega = 0$  and let $\E$
be a normal crossings divisor   $(\C^{2},0)$.
An analytic map  $\phi:(\C^{2},0) \to (\C^{3},0)$ is \emph{generically
transversal} to $\F$ \emph{outside} $\E$ if  $\sing(\phi^{*} \omega) \subset \E$. When $\E = \{0\}$ we
  simply say that $\phi$     is  \emph{generically
transversal} to $\F$.
}\end{ddef}

If $\F$ is a foliation at $(\C^{3},0)$ then  there are embeddings $\phi:(\C^{2},0) \to (\C^{3},0)$
  generically transversal to $\F$   satisfying   $\nu_{0}(\G) = \nu_{0}(\F)$, where
$\G = \phi^{*} \F$ \cite{mattei1980}. This fact is needed in the proof of Lemma \ref{lemma-simple3} below, a three-dimensional analogue of Lemma \ref{lemma-simple2}.
For our purposes, a relevant property of
generically transversal analytic maps is that they   preserve   well orientation of
 simple singularities:
\begin{lem}
\label{lemma-fund}
Let $\DD$ be a normal crossings divisor and $\F$ be a  $\DD$-well oriented germ of $\DD$-simple foliation at $(\C^{3},0)$.
 Let $\E$ be a normal crossing divisor at $(\C^{2},0)$ and
suppose that
 $\phi: (\C^{2},\E) \to  (\C^{3},\DD)$ is  an analytic
map generically transversal to $\F$ outside $\E$.
If $\G = \phi^{*} \F$ is $\E$-simple, then
$\G$ is  $\E$-well oriented.
\end{lem}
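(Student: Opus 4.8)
The plan is to analyze the situation according to the dimensional type of the simple foliation $\F$, reducing everything to the two-dimensional notion of well orientation already understood in Subsection~\ref{subsection-tau2}. Since $\G = \phi^{*}\F$ is assumed $\E$-simple, it is a germ of foliation at $(\C^{2},0)$ whose separatrix set has two transversal branches; the content of the lemma is that if these branches arrange themselves into a saddle-node with weak separatrix in $\E$, then $\F$ itself must have been a $\DD$-tangent saddle-node, contrary to hypothesis. The strategy is therefore contrapositive in spirit: I would show that a $\DD$-tangent saddle-node is the \emph{only} mechanism by which $\G$ can fail to be $\E$-well oriented.

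First I would dispose of the complex hyperbolic case. If $\F$ is of type \eqref{A} or \eqref{B3}, then by the description in Subsection~\ref{subsection-reduction-3d} the transversal type along every coordinate axis is complex hyperbolic and all separatrices are analytic. A generically transversal map $\phi$ meets the separatrix set $\sep_{0}(\F)$ (the union of coordinate planes) in curves, and the pull-back $\G$ inherits a non-degenerate linear part with eigenvalue quotient outside $\qe_{>0}$: the transversality guarantees $\phi$ does not introduce a vanishing eigenvalue, so $\G$ is non-degenerate and hence $\E$-well oriented by Definition~\ref{def-well-oriented}(i). The case $\tau_{0}(\F) = 2$ is essentially tautological, since there $\F$ is an analytic cylinder over a two-dimensional foliation and $\phi$ pulls back the two-dimensional well orientation directly.

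The substantive case is $\F$ a saddle-node of type \eqref{B1} or \eqref{B2} with $\tau_{0}(\F)=3$. Here I would argue that if $\G$ is a saddle-node at all, its weak separatrix is forced to be the $\phi$-preimage of one of the \emph{weak} separatrices of $\F$ (the planes $\{y=0\},\{z=0\}$ for \eqref{B1}, or $\{z=0\}$ for \eqref{B2}), because the strong separatrix of $\F$ pulls back to an analytic curve carrying the non-zero eigenvalue. Since $\F$ is $\DD$-well oriented, by Definition~\ref{well-oriented-3d}(ii) none of its weak separatrices lies in $\DD$. Combined with the hypothesis $\phi(\E)\subset\DD$ (more precisely $\sing(\phi^{*}\omega)\subset\E$ together with $\phi:(\C^{2},\E)\to(\C^{3},\DD)$), this means the weak separatrix of $\G$, being the pull-back of a weak separatrix of $\F$ not in $\DD$, cannot be contained in $\E$. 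Thus $\G$ satisfies Definition~\ref{def-well-oriented}(ii) and is $\E$-well oriented.

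The main obstacle I anticipate is the careful bookkeeping linking the \emph{weak/strong} dichotomy upstairs in $(\C^{3},0)$ to the weak/strong dichotomy of $\G$ downstairs: one must verify that generic transversality really does preserve which separatrix is strong (i.e. that $\phi$ does not accidentally collapse the non-zero eigenvalue direction or create a new zero eigenvalue), and that the condition $\sing(\phi^{*}\omega)\subset\E$ correctly translates the containment $\phi(\E)\subset\DD$ into the statement ``weak separatrix of $\G$ is contained in $\E$ only if the corresponding weak separatrix of $\F$ is contained in $\DD$.'' This amounts to a direct computation with the normal forms \eqref{B1} and \eqref{B2}: writing $\phi$ in coordinates, pulling back $\omega$, and reading off the linear part and the weak-separatrix condition of the resulting planar saddle-node. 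The calculation is routine but must be done for each of \eqref{B1} and \eqref{B2} separately, paying attention to the transversal sections described in the bulleted discussion following model \eqref{B2}.
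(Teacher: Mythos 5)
Your overall strategy --- pass to the formal normal forms, track which separatrix of $\F$ each branch of $\sep_{0}(\G)$ is mapped into, and deduce that the weak separatrix of $\G$ cannot lie in $\E$ --- is the same as the paper's, and the complex hyperbolic case is disposed of there exactly as you suggest (by invoking the corresponding lemma of \cite{cano2015}). But your proposal stops precisely where the paper's proof begins: the ``routine computation'' you defer to the last paragraph is the entire content of the argument, and the qualitative reasoning you offer in its place has a concrete hole.

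The hole is in the inference ``the weak separatrix of $\G$ is the preimage of a weak separatrix $W$ of $\F$; $W\not\subset\DD$ by well orientation; branches of $\E$ map into $\DD$; hence the weak separatrix of $\G$ is not in $\E$.'' The dichotomy ``maps into $W$'' versus ``maps into $\DD$'' is not exclusive: a branch $B$ of $\E$ is only constrained (via Lemma \ref{invariant-curve}) to map into $\sep_{0}(\F)\cap\DD$, and nothing in the hypotheses prevents $\phi(B)$ from lying in the curve $W\cap\DD$, a singular axis of $\F$ of saddle-node transversal type; such a $B$ maps into $W$ \emph{and} into $\DD$, so your argument cannot decide whether it is weak or strong for $\G$. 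Only the residue bookkeeping settles this: writing $\phi_{i}=u^{r_{i}}v^{s_{i}}\tilde{\phi}_{i}$ as in the paper, the residue of $\G$ along such a $B$ picks up the positive integer contribution $p_{1}s_{1}+p_{2}s_{2}$ from the non-weak directions, which forces $B$ to be a non-degenerate-type branch regardless of whether it also lands in $W$. Two further points. First, the case $\tau_{0}(\F)=2$ is not ``tautological'': $\phi$ is an arbitrary generically transversal map, not an embedding, so the induced map of surfaces may be ramified, and the computation shows for instance that when both branches of $\E$ map into $\DD$ the pull-back of a saddle-node is a \emph{non-degenerate} singularity of type \eqref{b2} --- still well oriented, but not by ``pulling back the two-dimensional well orientation directly.'' Second, for $\tau_{0}(\F)=3$ only model \eqref{B2} with $\DD=\{xy=0\}$ is compatible with the hypotheses (any $\DD$-simple singularity of type \eqref{B1} has a weak separatrix inside $\DD$ and is therefore $\DD$-tangent), so the parallel case analysis over \eqref{B1} and \eqref{B2} you announce is partly vacuous; \eqref{B1}-type behaviour enters only through the dimensional-type-two transversal sections.
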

\begin{proof}
If $\F$ is a simple complex hyperbolic, then $\G = \phi^{*} \F$ is   simple non-degenerate      by \cite[Lemma 4.3]{cano2015}. Thus, we can restrict ourselves to the case where $\F$ is a $\DD$-well oriented saddle-node.
We fix  $(u,v)$  normalizing formal coordinates for $\G$, so that $\E \subset \sep(\G) = \{uv = 0\}$.
 We also fix normalizing formal coordinates $(x,y,z)$
for $\F$. Thus, all our objects become formal, including the map
 $\phi: (\C^{2},0) \to (\C^{3},0)$, that can be
  written as  $\phi(u,v) = (\phi_{1},\phi_{2},\phi_{3}) = (x,y,z)$.
 We    factorize the maximal powers of $u$ and $v$ from each  $\phi_{i}$,
 getting
\begin{equation}
\label{phi-factorization}
   \phi_{i}(u,v) = u^{r_{i}}v^{s_{i}} \tilde{\phi}_{i}(u,v), \ \ \ \text{for}\ i=1,2,3.
\end{equation}
Since $\E$ is $\G$-invariant, by Lemma \ref{invariant-curve}, it is mapped into the invariant part of $\DD$.
Thus, the eventual dicritical components  do not intervene in our analysis and we make
a simplification by
supposing that that $\DD$ is $\F$-invariant.
 We have two situations, depending on the dimensional type of $\F$:
\smallskip
\par \noindent \underline{Case I}:   $\tau_{0}(\F) = 2$. We take for $\F$, in coordinates $(x,y,x)$,
 the formal normal form \eqref{b1}:
\[ \omega = xy \left(  \frac{dx}{x} + \varphi(x) \frac{dy}{y} \right) = xy \tilde{\omega}. \]
In this way, $\DD = \{x=0\}$ and the weak separatrix is $\{y = 0\}$.
No curve outside $\{uv = 0\}$ can be $\G$-invariant. Hence both $\tilde{\phi}_{1}$ and  $\tilde{\phi}_{2}$ are unities.
We have
 \begin{equation}
 \label{pull-back-tau2}
 \phi^{*} \tilde{\omega}  =  r_{1} \frac{du}{u} + s_{1} \frac{dv}{v} +  \frac{d \tilde{\phi}_{1}}{\tilde{\phi}_{1}}
 + \varphi(u^{r_{1} }v^{s_{1} }  \tilde{\phi}_{1}) \left(    r_{2} \frac{du}{u} + s_{2} \frac{dv}{v}  + \frac{d \tilde{\phi}_{2}}{\tilde{\phi}_{2}} \right).
\end{equation}

Our analysis splits into two possibilities:

\smallskip
\par \noindent \underline{Subcase I.1}:\ $\E$ has two branches, $\E =  \{uv = 0\}$.
Since $\phi$ takes $\E$ into $\DD$, we must have $r_{1} > 0$ and $s_{1} > 0$.
By cancelling poles in \eqref{pull-back-tau2}, we find
\begin{equation}
\label{CHfinal}
 uv \phi^{*} \tilde{\omega} = r_{1} v du + s_{1} u dv + u v  \, \theta ,
 \end{equation}
for some formal $1-$form $\theta$. Thus, $\G$ is simple non-degenerate (of formal type \eqref{b2}).

\smallskip
\par \noindent \underline{Subcase I.2}:\ $\E$ has a single branch. Let us suppose $\E = \{v = 0\}$.
Since $\phi$   takes $\E$ into $\DD$, we have that $s_{1}>0$.
By Lemma \ref{invariant-curve}, the image of     $\Gamma = \{u = 0\}$   must be contained   in some $\F$-invariant plane
--- either in $\DD = \{x=0\}$ or in
$\{y=0\}$ ---
not in both, for $\phi$ is   generically transversal   outside $\E$. According to this, we have:
\begin{itemize}[leftmargin=*]
\item If $\phi(\Gamma) \subset \DD$, then $r_{1} > 0$ and $r_{2} = 0$.
We recover the situation of the previous subcase:
  $\G$ is induced by a $1-$form of the type \eqref{CHfinal}, having a simple non-degenerate singularity
at the origin.
\item
If $\phi(\Gamma) \subset \{y=0\}$, then $r_{1} = 0$ and $r_{2} > 0$.
By cancelling poles in \eqref{pull-back-tau2}, we get
\[uv \phi^{*} \tilde{\omega}= s_{1} u dv +
 r_{2}  v   \varphi( v^{s_{1}}\tilde{\phi}_{1} )    du +
 u v  \, \theta ,\]
 for some formal $1-$form $\theta$. Thus, $\G$ has a saddle-node singularity
 at the origin whose weak separatrix is $\{u=0\}$. It is therefore    $\E$-well oriented.
\end{itemize}

\smallskip
\par \noindent \underline{Case II}:   $\tau_{0}(\F) = 3$. We can suppose that $\F$ is of type \eqref{B2} and that $\DD = \{xy=0\}$.
The weak separatrix is $\{z=0\}$.
This time,   $\tilde{\phi}_{1}$, $\tilde{\phi}_{2}$ and $\tilde{\phi}_{3}$ in \eqref{phi-factorization} are unities.
Writing $\omega  = xyz \tilde{\omega}$, we find
\begin{eqnarray*}
\phi^{*} \tilde{\omega} & = & p_{1} \left( r_{1} \frac{du}{u} + s_{1} \frac{dv}{v} + \frac{d \tilde{\phi}_{1}}{\tilde{\phi}_{1}} \right)
+ p_{2}\left( r_{2} \frac{du}{u} + s_{2} \frac{dv}{v} + \frac{d \tilde{\phi}_{2}}{\tilde{\phi}_{2}} \right) + \medskip \\
   &  &  \varphi(\phi_{1}^{p_{1}} \phi_{2}^{p_{2}})
 %\varphi(u^{p_{1}r_{1} + p_{2}r_{2}}v^{p_{1}s_{1} + p_{2}s_{2}} \tilde{\phi}_{1}^{p_{1}} \tilde{\phi}_{2}^{p_{2}})
\left(
   \lambda_{2} \left( r_{2} \frac{du}{u} + s_{2} \frac{dv}{v} + \frac{d \tilde{\phi}_{2}}{\tilde{\phi}_{2}} \right) +
   \lambda_{3} \left(r_{3} \frac{du}{u} + s_{3} \frac{dv}{v} + \frac{d \tilde{\phi}_{3}}{\tilde{\phi}_{3}} \right)
\right)  \\
 & = &
 r \frac{du}{u} + s \frac{dv}{v}
 + p_{1} \frac{d \tilde{\phi}_{1}}{\tilde{\phi}_{1}} + p_{2} \frac{d \tilde{\phi}_{2}}{\tilde{\phi}_{2}} +    \\
 &   &
 \varphi(u^{r}v^{s} \tilde{\phi}_{1}^{p_{1}} \tilde{\phi}_{2}^{p_{2}})
\left(
  \left( \lambda_{2}  r_{2} + \lambda_{3} r_{3} \right) \frac{du}{u} + \left( \lambda_{2} s_{2} + \lambda_{3} s_{3} \right)
  \frac{dv}{v} + \lambda_{2} \frac{d \tilde{\phi}_{2}}{\tilde{\phi}_{2}}  +
    \lambda_{3} \frac{d \tilde{\phi}_{3}}{\tilde{\phi}_{3}}
\right),
\end{eqnarray*}
where  $r = p_{1}r_{1} + p_{2}r_{2} $ and $s = p_{1}s_{1} + p_{2}s_{2} $.
Once more we have two subcases:

\par \noindent \underline{Subcase II.1}\ $\E$ has two branches, $\E =  \{u v = 0\}$.
Since
  $\phi$ takes $\E$ into $\DD$, we have
that either $r_{1}> 0$ or $r_{2} > 0$ and, by the same token,   either  $s_{1}> 0$ or $s_{2} > 0$.
Thus, both
 $r > 0$ and $s > 0$.
By cancelling poles, we find
\begin{equation}
\label{eq-phi*}
 uv \phi^{*} \tilde{\omega}   =   r v du + s u dv + uv \theta
 \end{equation}
for some formal $1-$form $\theta$. This
shows that $\G$ has a simple non-degenerate singularity, which is $\E$-well oriented.

\par \noindent \underline{Subcase II.2} \
$\E$ has a single branch, which we suppose to be $\E = \{v = 0\}$.
Since $\phi$    takes $\E$ into $\DD$, and then
either $s_{1} > 0$ or $s_{2} > 0$, which gives at once that $s>0$.
The curve $\Gamma = \{u = 0\}$ is mapped into one of the $\F$-invariant planes.
We have two possibilities:
\begin{itemize}[leftmargin=*]
\item If $\phi(\Gamma) \subset \DD$, then either $r_{1} > 0$ or $r_{2} > 0$ (not both),
implying that also
 $r > 0$. We are in the same situation of the preceding subcase:   $\G$ is induced by a $1-$form as in \eqref{eq-phi*},
 giving that it is simple non-degenerate and  $\E$-well oriented.
\item
If $\phi(\Gamma) \not\subset \DD$, then $\phi(\Gamma)  \subset\{z=0\}$. This implies that
  $r_{3} > 0$ and  $r_{1} =  r_{2} = 0$, giving $r =0$.
By cancelling poles, we get
\[uv \phi^{*} \tilde{\omega} = s  u dv +
 \lambda_{3} r_{3} v \varphi(v^{s} \tilde{\phi}_{1}^{p_{1}} \tilde{\phi}_{2}^{p_{2}}) du+
 u v  \, \theta ,\]
 for some formal $1-$form $\theta$. Thus, $\G$ has a saddle-node singularity
 at the origin whose weak separatrix is $\Gamma = \{u=0\}$. It is therefore    $\E$-well oriented.
\end{itemize}
\end{proof}

\begin{obss}
{\rm  We can replace, in   Lemma \ref{lemma-fund},  the hypothesis  ``$\G = \phi^{*} \F$ is $\E$-simple''  for ``$\G$  has exactly two transversal separatrices''. The fact of $\G$ being $\E$-simple is a consequence of
  the proof.
}\end{obss}

We can now extend to the three dimensional case  the     definition of \textit{second type foliation}.
As in dimension two, we take into account the final models in the process of   reduction of singularities:
\begin{ddef}
\label{second-type-3d}
{\rm
Let $\mathcal{F}$ be a germ of codimension one holomorphic foliation at $({\co}^3,0)$.   We say that  $\mathcal{F} $ is a \textit{second type} foliation  if  there exists a
reduction process $\pi: (\tilde{M},\DD) \to (\C^{3}, \sing(\F))$ such
that all singularities of $\tilde{\F} = \pi^{*} \F$
are $\DD$-well oriented.
}\end{ddef}

The definition of second type foliations is independent of the reduction of
singularities. Besides, it may be expressed in terms of two dimensional sections.
This is the content of the following result:

\begin{prop}
\label{teo-second-type-3d}
For a germ of   holomorphic foliation $\mathcal{F}$ at $(\mathbb{C}^3,0)$, the following facts
 are equivalent:
 \begin{enumerate}[label={(\arabic*)}]
\item   $\F$ is a second type foliation;
\item   for every normal crossings divisor $\E$ at $(\C^2,0)$
and every
analytic   map $\phi:(\C^2,\E)\to (\C^3,\sing(\F))$   generically transversal to $\mathcal{F}$
outside $\E$
  we have that
$\G = \phi^{*}\mathcal{F}$ is  $\E$-second type.
\item  for every
reduction of singularities $\pi: (\tilde{M},\DD) \to (\C^{3}, \sing(\F))$ for $\F$,
  all singularities of $\tilde{\F} = \pi^{*} \F$
are $\DD$-well oriented.
\end{enumerate}
\end{prop}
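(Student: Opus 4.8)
The plan is to prove the cyclic chain of implications $(1)\Rightarrow(2)\Rightarrow(3)\Rightarrow(1)$; besides giving the equivalence, this simultaneously establishes that the notion of second type is independent of the chosen reduction, which is precisely the content of $(1)\Leftrightarrow(3)$. The implication $(3)\Rightarrow(1)$ is immediate from Definition \ref{second-type-3d}: requiring that \emph{every} reduction produce only $\DD$-well oriented singularities certainly provides the \emph{one} reduction demanded by the definition.

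To prove $(1)\Rightarrow(2)$, fix a reduction $\pi:(\tilde{M},\DD)\to(\C^3,\sing(\F))$ for which every singularity of $\tilde{\F}=\pi^{*}\F$ is $\DD$-well oriented, and let $\phi:(\C^2,\E)\to(\C^3,\sing(\F))$ be generically transversal to $\F$ outside $\E$. The idea is to resolve the indeterminacy of $\phi$ against the tower $\pi$: performing on the source the blow-ups prescribed by the centers of $\pi$, together with those needed to adapt $\G=\phi^{*}\F$ to $\E$, I obtain an $\E$-reduction $\sigma:(\tilde{N},\tilde{\E})\to(\C^2,0)$ of $\G$ and a lifted map $\tilde{\phi}:(\tilde{N},\tilde{\E})\to(\tilde{M},\DD)$ satisfying $\pi\circ\tilde{\phi}=\phi\circ\sigma$ and still generically transversal to $\tilde{\F}$ outside $\tilde{\E}$. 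By functoriality of the pull-back, $\tilde{\G}:=\sigma^{*}\G=\tilde{\phi}^{*}\tilde{\F}$, and generic transversality forces $\sing(\tilde{\G})\subset\tilde{\E}$. It then remains to check well orientation at each $\tilde{\E}$-simple point $q$ of $\tilde{\G}$, according to the nature of its image $p=\tilde{\phi}(q)$. If $p$ is a regular point of $\tilde{\F}$, then $\tilde{\G}$ has locally a holomorphic first integral near $q$, hence cannot be a saddle-node, and being $\tilde{\E}$-simple it is complex hyperbolic and thus $\tilde{\E}$-well oriented. If $p$ is a singular point of $\tilde{\F}$, it is $\DD$-simple and $\DD$-well oriented by hypothesis, so Lemma \ref{lemma-fund} applies and yields that $\tilde{\G}$ is $\tilde{\E}$-well oriented at $q$. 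In either case every singularity of $\tilde{\G}$ is $\tilde{\E}$-well oriented, so $\G$ is $\E$-second type.

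For $(2)\Rightarrow(3)$ I argue by contraposition. Suppose some reduction $\pi$ of $\F$ exhibits a $\DD$-tangent saddle-node; then there is a singular component $\Lambda\subset\DD$ of saddle-node transversal type whose weak separatrix surface lies inside $\DD$. Pick $p'\in\Lambda\setminus\{0\}$, where $\tau_{p'}(\tilde{\F})=2$, and a germ of $2$-dimensional section $\Sigma$ through $p'$, transversal to $\Lambda$, generically transversal to $\tilde{\F}$, and meeting $\DD$ in a normal crossings divisor $\E_0=\Sigma\cap\DD$. The restriction $\G=\tilde{\F}|_{\Sigma}$ is then a $2$-dimensional saddle-node whose weak separatrix is $\Sigma\cap(\text{weak separatrix surface})\subset\E_0$; that is, an $\E_0$-tangent saddle-node. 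Writing $j:\Sigma\hookrightarrow\tilde{M}$ for the inclusion and setting $\phi=\pi\circ j:(\C^2,0)\to(\C^3,\sing(\F))$, which is generically transversal to $\F$ outside $\E_0$ by the genericity of $\Sigma$, we get $\phi^{*}\F=j^{*}\tilde{\F}=\G$. Since $\G$ is already $\E_0$-simple and is an $\E_0$-tangent saddle-node at $p'$, it is not $\E_0$-second type, contradicting $(2)$.

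The main obstacle is the lifting step in $(1)\Rightarrow(2)$: one must verify that the blow-ups resolving $\phi$ against $\pi$ can be organized into a genuine $\E$-reduction of $\G$ and that generic transversality outside the divisor is preserved at each stage of the tower, which is where the technique of \cite{mozo2009} has to be adapted to the present setting. A secondary point, in $(2)\Rightarrow(3)$, is the standard but nontrivial genericity argument ensuring that $\Sigma$ can be chosen transversal to $\tilde{\F}$ and in normal crossings with $\DD$ while keeping the weak separatrix of the section inside $\E_0$, so that the tangent character of the saddle-node is transmitted faithfully to dimension two.
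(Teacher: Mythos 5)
Your proposal is correct and follows essentially the same route as the paper: the cyclic chain $(1)\Rightarrow(2)\Rightarrow(3)\Rightarrow(1)$, with $(1)\Rightarrow(2)$ obtained by lifting $\phi$ through the reduction towers and applying Lemma \ref{lemma-fund} at each simple point of $\tilde{\G}$, and $(2)\Rightarrow(3)$ by taking a two-dimensional section transversal to $\tilde{\F}$ at a tangent saddle-node of dimensional type $2$ and composing with $\pi$. The only (harmless) additions are your explicit treatment of simple points of $\tilde{\G}$ mapping to regular points of $\tilde{\F}$ via first integrals, a case the paper leaves implicit.
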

\begin{proof}
 \emph{(1)} $\Rightarrow$ \emph{(2)}.\
  Let $\pi:  (\tilde{M},\DD) \to (\mathbb{C}^3, \sing(\F))$ be a reduction of singularities for $\F$
such that all singularities of  $\tilde{\F} = \pi^{*} \F$ are $\DD$-well oriented and
  $\phi: (\C^{2},\E) \to (\mathbb{C}^3, \sing(\F))$ be an analytic map generically transversal to $\F$ outside
 a normal crossings divisor   $\E$. Take
  $\sigma: (\tilde{N}, \E^{\#}) \to (\C^{2},0)$   an $\E$-reduction of singularities and   denote
  $\tilde{\E} = \sigma^{-1}(\E) = \E^{\#} \cup \sigma^{*}\E$.
Then, by the universal property of blow-up maps, there exists an analytic map
$\psi: (\tilde{N}, \tilde{\E}) \to (\tilde{M}, \DD)$   such that the following diagram commutes:
 $$ \xymatrix{ \left(\tilde{N},\tilde{\E} \right) \ar[d]_{\sigma} \ar[r]^{\psi}
 \ar@{}[dr]|{\circlearrowright} &\left(\tilde{M},\mathcal{D}\right) \ar[d]^{\pi} \\ \left(\mathbb{C}^2, \E \right) \ar[r]^{\hspace{-0.5cm} \phi} & \left(\mathbb{C}^3,\sing(\F)\right) \, .} $$
 Note that
 \[ \tilde{\G} = \sigma^{*} \G =
 \sigma^{*} \phi^{*} \F =  \psi^{*} \pi^{*} \F = \psi^{*} \tilde{\F}.\]
 The result then follows by applying Lemma \ref{lemma-fund} to the local map
defined by $\psi$ at each simple singularity of $\tilde{\G}$ in order to conclude that
they are all $\tilde{\E}$-well oriented and, as a consequence, that $\G$ is $\E$-second type.

\emph{(2)} $\Rightarrow$ \emph{(3)}.\
Let $\pi:  (\tilde{M},\DD) \to (\mathbb{C}^3, \sing(\F))$ be a reduction of singularities for $\F$.
Suppose  that
  $\tilde{\F} = \pi^{*} \F$ has a tangent saddle-node, say at $p \in \DD$.
We can choose $p$  such that  $\tau_{p}(\tilde{\F}) = 2$.
Take $\rho: (\C^{2},0) \to (\tilde{M},p)$ an analytic embedding transversal to $\tilde{\F}$.
Setting $\E = \rho^{-1}(\DD)$, we have that $\G =\rho^{*} \tilde{\F}$ is an $\E$-tangent saddle-node.
Define $\phi = \pi \circ \rho :(\C^{2},\E) \to (\mathbb{C}^3, \sing(\F))$, making   the diagram 
\[
\xymatrix{ \ar@{}[dr]|{\hspace{1.7cm} \circlearrowright} &\left(\tilde{M},\mathcal{D}\right) \ar[d]^{\pi} \\
 \left(\mathbb{C}^2,\E\right)\ar[r]^{\hspace{-.5cm}\phi} \ar[ru]^{\rho}&
\left(\mathbb{C}^3,Sing(\mathcal{F})\right)}
\]
commute. We have that
$\G = \phi^{*} \F$ is an $\E$-tangent saddle-node singularity, which is in contradiction
with \emph{(2)}.

\emph{(3)} $\Rightarrow$ \emph{(1)}.\ This is evident, using the fact that a foliation
in ambient dimension three has a reduction of singularities.

\end{proof}

\begin{cor}
\label{cor-multiplicity}
Let $\F$ be a non-dicritical second type foliation at $(\C^{3},0)$ and
  $f= 0$, where $f \in \hat{\cl{O}}_{3}$, be a   reduced equation of   separatrices. Then $\nu_{0}(\F) = \nu_{0}(d f)$.
\end{cor}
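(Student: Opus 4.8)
The plan is to reduce the statement to the two dimensional minimization property for second type foliations by means of a sufficiently generic plane section. First I would invoke \cite{mattei1980} to fix an analytic embedding $\phi:(\C^{2},0)\to(\C^{3},0)$, generically transversal to $\F$, such that $\nu_{0}(\G)=\nu_{0}(\F)$, where $\G=\phi^{*}\F$. Taking $\phi$ generic, I would moreover arrange that the section preserves the algebraic multiplicity of the separatrix hypersurface, namely $\nu_{0}(f\circ\phi)=\nu_{0}(f)$ --- which holds for a generic hyperplane section of a germ of hypersurface --- and that $f\circ\phi$ remains reduced. All three conditions are Zariski-open on the choice of $\phi$, hence simultaneously attainable.

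Next, since $\phi$ is generically transversal to $\F$, Proposition \ref{teo-second-type-3d} applied with $\E=\{0\}$ shows that $\G$ is a second type foliation at $(\C^{2},0)$. I would then check that $\G$ is non-dicritical: using the commutative diagram relating a reduction of $\G$ to the reduction $\pi:(\tilde{M},\DD)\to(\C^{3},0)$ of $\F$, a dicritical component for $\G$ would force $\phi$ to meet a dicritical component of $\DD$, contradicting that $\F$ is non-dicritical. The decisive point is to identify $\sep_{0}(\G)$ with $\phi^{-1}(\sep_{0}(\F))=\{f\circ\phi=0\}$. The inclusion $\phi^{-1}(\sep_{0}(\F))\subseteq\sep_{0}(\G)$ is immediate, as the trace on the plane of an $\F$-invariant surface is a $\G$-invariant curve. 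For the reverse inclusion I would use that every formal curve invariant by the non-dicritical foliation $\F$ is contained in $\sep_{0}(\F)$: lifting such a curve through $\pi$ produces a formal curve invariant by the simple foliation $\tilde{\F}$, which by Lemma \ref{invariant-curve} lies in the local separatrices at the point where it meets $\DD$, and these project into $\sep_{0}(\F)$. Applying this to $\phi(\gamma)$ for each separatrix $\gamma$ of $\G$ yields $\gamma\subseteq\phi^{-1}(\sep_{0}(\F))$, whence $\sep_{0}(\G)=\phi^{-1}(\sep_{0}(\F))$ and, since $f\circ\phi$ is reduced, $\nu_{0}(\sep_{0}(\G))=\nu_{0}(f\circ\phi)=\nu_{0}(f)$.

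With these identifications in place, the conclusion follows from the two dimensional minimization property \cite{mattei2004}: since $\G$ is non-dicritical and second type, one has $\nu_{0}(\G)=\nu_{0}(\sep_{0}(\G))-1$. Combining this with the chosen genericity gives
\[
\nu_{0}(\F)=\nu_{0}(\G)=\nu_{0}(\sep_{0}(\G))-1=\nu_{0}(f)-1=\nu_{0}(df),
\]
where the last equality holds because $f$ is reduced, so that the Euler relation $\sum x_{i}\,\partial_{i}f_{m}=m\,f_{m}\neq 0$ on the leading form $f_{m}$ forces $\nu_{0}(df)=\nu_{0}(f)-1$.

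I expect the main obstacle to be the separatrix-matching step, that is, proving that the generic section introduces no spurious separatrices, equivalently that every invariant formal curve of the non-dicritical foliation $\F$ is contained in $\sep_{0}(\F)$. The lifting argument through the reduction of singularities is the natural route, but it requires controlling how the local separatrices of $\tilde{\F}$ at points of $\DD$ project down to $\sep_{0}(\F)$, and in particular distinguishing the components of $\DD$ that are purely exceptional from those that are strict transforms of genuine separatrices of $\F$. Ensuring that a single generic $\phi$ preserves all of $\nu_{0}(\F)$, $\nu_{0}(f)$ and the separatrix set at once is a secondary technical point, settled by intersecting the finitely many open conditions above.
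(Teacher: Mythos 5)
Your proposal is correct and follows essentially the same route as the paper: a generically transversal plane section preserving the relevant multiplicities, Proposition \ref{teo-second-type-3d} to transfer the second type property to $\G=\phi^{*}\F$, and the two-dimensional minimization property of \cite{mattei2004}. The additional details you supply (the identification $\sep_{0}(\G)=\{f\circ\phi=0\}$ via the lifting argument and Lemma \ref{invariant-curve}, and the Euler-relation computation of $\nu_{0}(df)$) are points the paper leaves implicit, and they check out.
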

\begin{proof} Let $\phi:(\C^{2},0) \to (\C^{3},0)$ be an embedding generically transversal to
$\F$. We can take $\phi$  satisfying  $\nu_{0}(\G) =  \nu_{0}(\F)$, where $\G = \phi^{*} \F$,
and $\nu_{0}(dg) =  \nu_{0}(df)$, where $g = \phi^{*}f = f \circ \phi$.
The foliation $\G$ is non-dicritical, second type   by Proposition \ref{teo-second-type-3d}, having $g=0$ as a reduced equation
of separatrices. Thus, the minimization
property of the algebraic multiplicity gives
$\nu_{0}(\G) = \nu_{0}(dg)$, proving the corollary.
\end{proof}

%%%%%%%%%%%%%%%%%%%%%%%%%%%%%%%%%%%%%%%%%%%%%%%%%%%%%%%%%%%%%%%%%%%%%%%%%%%%%%%%%%%%%
%%%%%%%%%%%%%%%%%%%%%%%%%%%%%%%%%%%%%%%%%%%%%%%%%%%%%%%%%%%%%%%%%%%%%%%%%%%%%%%%%%%%%
%%%%%%%%%%%%%%%%%%%%%%%%%%%%%%%%%%%%%%%%%%%%%%%%%%%%%%%%%%%%%%%%%%%%%%%%%%%%%%%%%%%%%

%%%%%%%%%%%%%%%%%%%%%%%%%%%%%%%%%%%%%%%%%%%%%%%%%%%%%%%%%%%%%%%%%%%%%%%%%%%%%%%%%%%%%%%%%%%%%%%%%%%%%%%%%
%%%%%%%%%%%%%%%%%%%%%%%%%%%%%%%%%%%%%%%%%%%%%%%%%%%%%%%%%%%%%%%%%%
%%%%%%%%%%%%%%%%%%%%%%%%%%%%%%%%%%%%%%%%%%%%%%%%%%%%%%%%%%%%%%%%%%%%%%%%%%%%%%%%%%%%%

\section{Desingularization of separatrices}
\label{section-desingularization}

In this section, we prove that, in the non-dicritical case, a second type foliation and its set of
separatrices have the same reduction of singularities. The arguments are similar  to those in \cite{mozo2009}.

We begin by a  brief comment  on the method of Cano-Cerveau for the construction
of separatrices for a non-dicritical foliation $\F$ at $(\C^{3},0)$ \cite[Part IV]{cano1992}. Let
  $\tilde{\F} = \pi^{*}\F$, where $\pi:(\tilde{M},\DD) \to (\C^{3},\sing(\F))$
is a reduction of singularities for $\F$. Let $\cl{U} \subset \DD$ be the analytic set formed by  all
  trace singularities, that is, points $p \in \sing(\tilde{\F})$ such that
$\e_{p}(\DD) = \tau_{p}(\tilde{\F})-1$. Then, the blow-up map $\pi$ defines canonically  a bijection between $\sep_{0}(\F)$ and the
connected components of $\cl{U}$. This, in particular, has the following consequence:
if $\F$ is a non-dicritical foliation at $(\C^{3},0)$ and $\Gamma$ is an $\F$-invariant formal curve outside $\sing(\F)$, then
  $\Gamma \subset \sep_{0}(\F)$. Indeed, since $\DD$ is $\tilde{\F}$-invariant,    $\tilde{\Gamma} = \pi^{*} \Gamma$ touches $\DD$ at a   point $p$ that is singular for $\tilde{\F}$.
   By Lemma \ref{invariant-curve},
$\tilde{\Gamma}$ must be contained in a component of $\sep_{p}(\tilde{\F})$ which lies outside $\DD$. Thus,
  $\Gamma$ lies in the correspondent irreducible component of $\sep_{0}(\F)$.

We have the following analogue of   Lemma \ref{lemma-simple2} for foliations in dimension three:
\begin{lem}
\label{lemma-simple3}
Let $\F$ be a germ of non-dicritical foliation at $(\C^{3},0)$ and $\DD$ be an  $\F$-invariant    normal crossings divisor.
Suppose that $\sep_0(\F)$ is formed by $s_{0}(\F)= 2$ or $3$ smooth surfaces with normal crossings and  that
\begin{enumerate}[label={(\roman*)}]
\item  $s_{0}(\F)-1 \leq \e_{0}(\DD) \leq  s_{0}(\F)$;
\item  $\F$ is second type.
\end{enumerate}
Then $\F$ is   $\DD$-simple.
\end{lem}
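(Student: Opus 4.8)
The plan is to reduce the three-dimensional statement to the two-dimensional Lemma \ref{lemma-simple2} by means of a generic transversal section, exploiting the section characterization of second type foliations furnished by Proposition \ref{teo-second-type-3d}. First I would dispose of the dimensional type: since $\F$ is second type and $\DD$ is $\F$-invariant, I want to show that $\F$ is already simple, so the task is to produce the right simple formal model under the hypotheses on $\sep_0(\F)$ and $\e_0(\DD)$. I would split according to $s_0(\F)$. When $s_0(\F)=2$, the separatrix set has two smooth branches crossing normally; a generic transversal embedding $\phi:(\C^2,0)\to(\C^3,0)$ then pulls back $\sep_0(\F)$ to two transversal branches of $\G=\phi^*\F$, and by Proposition \ref{teo-second-type-3d} the foliation $\G$ is second type. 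Lemma \ref{lemma-simple2} then gives that $\G$ is simple, and I would transfer this back to conclude $\tau_0(\F)=2$ with $\F$ a cylinder over a simple two-dimensional foliation, hence $\DD$-simple once the adaptation inequalities in hypothesis (i) are checked.

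When $s_0(\F)=3$, the three smooth branches with normal crossings force, after choosing coordinates $(x,y,z)$ in which $\sep_0(\F)=\{xyz=0\}$, a defining $1$-form whose coefficients vanish appropriately along the coordinate planes. The plan is to argue that second type forces the algebraic multiplicity to be minimal, namely $\nu_0(\F)=\nu_0(df)=2$ where $f=xyz$, using Corollary \ref{cor-multiplicity}. This multiplicity bound pins down the linear part of the dual vector field to be diagonalizable with the coordinate axes as eigendirections, and I would then read off that $\F$ is in one of the simple models \eqref{A}, \eqref{B1}, \eqref{B2}, \eqref{B3}. The genericity of the section and Proposition \ref{teo-second-type-3d}, combined with the fact (Lemma \ref{invariant-curve}) that invariant curves of a simple foliation sit inside the separatrix set, rule out the non-simple resonant possibilities.

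The main obstacle I anticipate is the $s_0(\F)=3$ case, specifically verifying that minimal algebraic multiplicity together with three prescribed transversal separatrices actually yields a simple model rather than merely constraining the linear part. In dimension two the analogous step in Lemma \ref{lemma-simple2} only has to exclude the resonance $\lambda_1/\lambda_2\in\qe_{<0}$, which is handled by counting branches of $\sep_0(\G)$; in dimension three one must simultaneously control the behavior along each of the three axes, where the transversal types could a priori be saddle-nodes. Here the hypothesis that $\F$ is second type is essential: a tangent saddle-node would appear as a weak separatrix sitting inside the invariant divisor $\DD$, contradicting $\DD$-well orientation through Proposition \ref{teo-second-type-3d}. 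I would therefore slice along sections transversal to each coordinate axis, apply the two-dimensional lemma on each slice to force simplicity of the transversal type, and assemble these into one of the global simple models, finally checking the adaptation inequalities of hypothesis (i) to conclude that $\F$ is $\DD$-simple.
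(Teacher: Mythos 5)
Your overall strategy---slice by a generic transversal embedding, invoke Proposition \ref{teo-second-type-3d} to get a two-dimensional second type foliation, apply Lemma \ref{lemma-simple2}, and use minimality of the algebraic multiplicity in the three-separatrix case---is the same as the paper's. But in both cases the step you label ``transfer back'' or ``assemble'' is precisely where the content lies, and you have not supplied the tools that make it work. For $s_{0}(\F)=2$: knowing that $\G=\phi^{*}\F$ is simple does not by itself give $\tau_{0}(\F)=2$; the paper realizes $\F$ as an unfolding of the plane foliation $\G$ and invokes the triviality of unfoldings of simple singularities (Lemma 1.1.5 of \cite{mattei1991}) to conclude that $\F$ is a cylinder over $\G$. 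Without that (or an equivalent rigidity statement) the claim that $\F$ is $\DD$-simple does not follow. Note also that to apply Lemma \ref{lemma-simple2} you must know that $\G$ has \emph{exactly} two separatrices, not merely that the two components of $\sep_{0}(\F)$ pull back to two transversal branches; the paper gets this from the Cano--Cerveau correspondence between $\sep_{0}(\F)$ and trace singularities, which you do not mention.

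For $s_{0}(\F)=3$ the gap is more serious. There is no ``dual vector field'' for a codimension one foliation at $(\C^{3},0)$, so ``diagonalizable linear part with the coordinate axes as eigendirections'' is not the right invariant; what $\nu_{0}(\F)=2$ actually gives, after writing $\omega = xyz\left(a\,\frac{dx}{x}+b\,\frac{dy}{y}+c\,\frac{dz}{z}\right)$ in coordinates normalizing $\sep_{0}(\F)$, is that one of $a,b,c$ is a unit, i.e.\ that the singularity is \emph{pre-simple} in the sense of \cite{cano1992}. Your slicing transversal to each axis does show (via the $s_{0}=2$ case) that every point of $\sing(\F)\setminus\{0\}$ is $\DD$-simple, but passing from ``pre-simple at $0$ and simple at all adjacent singular points'' to ``simple at $0$'' is exactly the content of Proposition 4.7 and Definition 4.8 of \cite{cano1992}; it is not a formal bookkeeping step, and your appeal to Lemma \ref{invariant-curve} cannot substitute for it, since that lemma assumes the foliation is already simple. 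Identify these two external inputs (triviality of unfoldings, and the Cano--Cerveau pre-simple-to-simple criterion) and your outline becomes the paper's proof.
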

\begin{proof}
An important point here is that $\sing(\F)$ is an analytic set, formed by $s_{0}(\F)!/2$ pairwise transversal smooth curves corresponding to the intersections of components of $\sep_0(\F)$.

Let us first suppose   $s_{0}(\F)= 2$.  Let $\phi: (\C^{2},0) \to (\C^{3},0)$ be a germ of analytic embedding
generically transversal to $\F$. It follows from Cano-Cerveau's method that $\G = \phi^{*} \F$ has exactly two invariant
curves, which are smooth and transversal. Besides, $\G$ is second type, by Proposition \ref{teo-second-type-3d},  and simple, by Lemma \ref{lemma-simple2}.
To see that $\F$ is also simple, it suffices to see it --- after a coordinate change --- as the unfolding
of the simple foliation $\G$ at $(\C^{2},0)$. This unfolding must be trivial by \cite[Lemma 1.1.5]{mattei1991}.
Thus, $\F$ has dimensional type $\tau = 2$ and is $\DD$-simple.

Suppose now  $s_{0}(\F)= 3$ and, once more, take
  a germ $\phi: (\C^{2},0) \to (\C^{3},0)$  of analytic embedding
generically transversal to $\F$, also satisfying $\nu_{0}(\F) = \nu_{0}(\G)$, where $\G = \phi^{*} \F$.
We have that $\G $ is a second type foliation with exactly three invariant
curves, smooth and pairwise transversal --- corresponding to the pre-images of the components
of $\sep_{0}(\F)$ --- which implies
 $ \nu_{0}(\G) = 2$. Choosing formal normalizing coordinates $(x,y,z)$
for $\sep_{0}(\F)$, we  find that
 $\F$ is induced by a $1-$form of the kind
\[ \omega = xyz \left( a \frac{dx}{x} + b\frac{dy}{y} + c \frac{dz}{z} \right) ,\]
where  $a,b,c \in \hat{\cl{O}}_{3}$ and, since   $\nu_{0}(\F) = 2$, at least one of them must be a unity. Then, it is a pre-simple singularity
(see \cite[Def. 2.2]{cano1992})
adapted to   $\DD$. On the other hand,    $\F$ has exactly two smooth transversal separatrices
at every point in
 $\sing(\F) \setminus \{0\}$ --- this too results from Cano-Cerveau's method.  Thus, by  the  case
$s_{0}(\F)= 2$, all these points are $\DD$-simple singularities.
We conclude  from \cite[Prop. 4.7 and Def. 4.8]{cano1992} that $\F$ is also $\DD$-simple at $0 \in \C^{3}$.
 \end{proof}

 We have all elements to prove the main result of this paper:

\begin{proof}(of Theorem \ref{theorem-main})
Evidently, a reduction of singularities for $\F$  desingularizes   $S = \sep_{0}(\F)$ and we   only have to work
 the inverse assertion. Let $\pi: (\tilde{M},\DD) \to (\C^{3},\sing(\F))$ be a composition of blow-ups
with non-singular centers that desingularizes $S$, that is, such that $\tilde{S} = \pi^{*}S$ is smooth
and has normal crossings with $\DD$. All such centers are permissible
for the reduction process of $\F$, since the foliation is non-dicritical
and $S$ is $\F$-invariant.
Now, the crucial fact is   that a point in the regular part of $\DD$ not lying in $\tilde{S}$ is regular for $\tilde{\F} = \pi^{*} \F$. Indeed, this follows from
 Corollary \ref{cor-multiplicity}, considering that,  at such a point, the unique local separatrix is contained in $\DD$.
 Therefore, all singularities of $\tilde{\F}$ satisfy the conditions of
  Lemma \ref{lemma-simple3}, leading to the conclusion that
they are all   $\DD$-simple.
\end{proof}

%%%%%%%%%%%%%%%%%%%%%%%%%%%%%%%%%%%%%%%%%%%%%%%%%%%%%%%%%%%%%%%%%%%%%%%%%%%%%%%%%%%%%%%%%%%%%%%%%%%%%%%%%%%%%
%%%%%%%%%%%%%%%%%%%%%%%%%%%%%%%%%%%%%%%%%%%%%%%%%%%%%%%%%%%%%%%%%%%%%%%%%%%%%%%%%%%%%%%%%%%%%%%%%%%%%%%%%%%%%
%%%%%%%%%%%%%%%%%%%%%%%%%%%%%%%%%%%%%%%%%%%%%%%%%%%%%%%%%%%%%%%%%%%%%%%%%%%%%%%%%%%%%%%%%%%%%%%%%%%%%%%%%%%%%

\section{Logarithmic foliations}
\label{section-logarithmic}

A singular holomorphic foliation $\F$ of degree $d \geq 0$  in $\pe^{3} = \pe^{3}_{\C}$ is given, in homogeneous coordinates
 $[X:Y:Z:W] \in \C^{4}$, by a polynomial $1-$form
  \[\omega =  A dX + Bdy + CdZ +D dW , \]
  where $A,B,C,D \in \C[X,Y,Z,W]$ are homogeneous of degree $d+1$ satisfying the Euler condition,
$A X + BY + CZ +D W = 0$, and the integrability condition, $\omega \wedge d \omega = 0$.
Let $S \subset \pe^{3}$ be a surface with reduced equation $F_1 \cdots   F_\ell =0$, where
$F_1,\ldots,  F_\ell \in \C[X,Y,Z,W]$ are irreducible homogeneous polynomials.
The foliation $\F$ is said to be \emph{logarithmic
with poles on} $S$ if it is defined by a   $1-$form of the kind
\[ \omega = F_1 \cdots   F_\ell \left(  \lambda_1    \frac{dF_1}{F_1} + \cdots +  \lambda_\ell    \frac{dF_\ell}{F_\ell}  \right) = F_1 \cdots   F_\ell \, \tilde{\omega} , \]
where
$\lambda_{1}, \ldots, \lambda_{\ell} \in \C^{*}$ comply with
\[   \lambda_{1} \deg F_1 + \cdots + \lambda_\ell \deg F_\ell = 0,\]
which is imposed by the Residue Theorem.
The set of poles of $\tilde{\omega}$ is  precisely  $S$, which is invariant by $\F$. Note that
\[ \deg \F =  \deg F_1 + \cdots + \deg F_\ell -2 = \deg S - 2 .\]
The foliation $\F$ may have some isolated singularities,
where it admits local holomorphic first integrals by Malgrange's Theorem \cite{malgrange1976}.
Let us then denote by $\sing_{2}(\F)$ the union of all components of codimension two
in $\sing(\F)$.
Some of these components are contained in $S$: the pairwise intersections
  of pole components
$\{F_{i} = F_{j} = 0\}$, as well as
codimension  two components of the singular set of each surface $\{F_{i} = 0\}$.
Outside  $S$, the $1-$form $\tilde{\omega}$ is holomorphic and closed, thus
each singularity has a local holomorphic first integral.
Besides, if $\F$ is non-dicritical,
at each point of  $\sing_{2}(\F) \cap S$, all local separatrices of $\F$ are contained in $S$.
Taking into account this description, we propose the following characterization of logarithmic foliations:

\begin{prop} Let $\F$ be a non-dicritical second type foliation in $\pe^{3}$. Suppose that there exists
 an algebraic surface $S \subset \pe^{3}$ invariant by $\F$ such that:
 \begin {enumerate}[label={(\roman*)}]
 \item for every $p \in \sing_{2}(\F) \cap S$,
 the local set of separatrices $\sep_{p}(\F)$ is contained in $S$;
 \item at every point outside $S$, the foliation $\F$ admits a holomorphic first integral.
 \end{enumerate}
 Then $\F$ is a logarithmic foliation with poles on $S$.
\end{prop}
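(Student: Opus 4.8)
The plan is to exhibit a global closed rational $1$-form on $\pe^{3}$ with polar divisor $S$, simple poles, defining $\F$, and then invoke the classification of logarithmic forms. Concretely, let $\omega$ be the homogeneous polynomial $1$-form defining $\F$ and put $F = F_{1}\cdots F_{\ell}$, so that $S = \{F=0\}$. I would study $\tilde{\omega} = \omega/F$ and try to prove that it is a closed logarithmic $1$-form with poles exactly on $S$. Granting this, $\tilde{\omega}$ is a global section of $\Omega^{1}_{\pe^{3}}(\log S)$; by the residue exact sequence and $H^{0}(\pe^{3},\Omega^{1})=0$, such a section is determined by its constant residues $\lambda_{i}$ along the components $\{F_{i}=0\}$, whence $\tilde{\omega} = \sum_{i=1}^{\ell}\lambda_{i}\, dF_{i}/F_{i}$, the residue theorem forcing $\sum_{i}\lambda_{i}\deg F_{i}=0$. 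This is precisely the assertion that $\F$ is logarithmic with poles on $S$, and the degree relation $\deg\F = \deg S - 2$ then emerges as a byproduct rather than an assumption.

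The control of the poles is the easy part. Since $\F$ is non-dicritical and $S$ is $\F$-invariant, each component $\{F_{i}=0\}$ is a leaf at its generic point, so $\omega$ does not vanish identically on it; as $F$ is reduced, $\tilde{\omega}$ has a pole of order exactly one along each $\{F_{i}=0\}$ and its polar set is contained in $S$. Away from $S$ I would use hypothesis (ii): at each $p\notin S$ there is a holomorphic first integral $h$, so locally $\omega = g\,dh$ and hence $d\omega = (dg/g)\wedge\omega$; this shows simultaneously that $\tilde{\omega}$ is holomorphic off $S$ and that it is closed there, since $dg/g$ is closed. Thus the only possible poles of $\tilde{\omega}$, and of $d\tilde{\omega}$, lie over $S$, and the problem localizes along $S$ and, more delicately, along $\sing_{2}(\F)\cap S$.

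The core of the argument --- and the step I expect to be hardest --- is the closedness of $\tilde{\omega}$, equivalently that $F$ is an integrating factor, $d\omega = (dF/F)\wedge\omega$. Because $\F$ is second type, Proposition~\ref{teo-second-type-3d} and Theorem~\ref{theorem-main} supply a map $\pi:(\tilde{M},\DD)\to\pe^{3}$ that reduces $\F$ while desingularizing $S$, so that $\pi^{*}S\subset\DD$ has normal crossings and every singularity of $\pi^{*}\F$ is a well oriented simple singularity; along each component of $\DD$ lying over $S$ the transverse type is simple, which yields a well-defined constant residue and matches the polar part of $dF/F$. Hypothesis (i) guarantees that at the points of $\sing_{2}(\F)\cap S$ no separatrix escapes $S$, so that no extra logarithmic poles are created, while the minimization of the algebraic multiplicity (Corollary~\ref{cor-multiplicity}) excludes any higher-order polar behaviour. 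It then remains to glue the locally defined connection forms $d\omega/\omega$ --- which differ by multiples of $\omega$ --- into the single global rational form $dF/F$; this is a cohomological matter, settled by the simple connectedness of $\pe^{3}$ (so there is no period or monodromy obstruction to the residues) together with $H^{1}(\pe^{3},\OO)=0$, which kills the gluing cocycle. Non-dicriticity and the second type hypothesis are exactly what forbid tangent saddle-nodes and thereby keep the poles of the connection form simple with constant residues; verifying this residue consistency and the vanishing of the gluing obstruction is the delicate point. Once $d\omega = (dF/F)\wedge\omega$ is established, $\tilde{\omega}$ is the sought closed logarithmic form with poles on $S$, and the classification recalled in the first paragraph concludes the proof.
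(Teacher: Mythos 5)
Your strategy --- exhibiting $F=F_{1}\cdots F_{\ell}$ as a global integrating factor and then classifying closed logarithmic $1$-forms on $\pe^{3}$ --- is genuinely different from the paper's, but it contains an outright error and a gap at precisely the step you flag as hard. The error: away from $S$ you write $\omega=g\,dh$ and conclude that $\tilde{\omega}=\omega/F$ is closed there ``since $dg/g$ is closed''. In fact
\[
d\left(\frac{\omega}{F}\right)=\frac{1}{F}\left(\frac{dg}{g}-\frac{dF}{F}\right)\wedge\omega ,
\]
which vanishes only when $dF/F-dg/g$ is proportional to $\omega$ --- and that is exactly the assertion that $F$ is an integrating factor, i.e.\ the thing to be proved. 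A local first integral produces a \emph{local} integrating factor $g$, not the global one $F$; closedness of $\omega/F$ fails pointwise in general and can only come from a global argument. The gap: that global argument is never supplied. The gluing of the local forms $\eta$ with $d\omega=\eta\wedge\omega$ into $dF/F$ is not settled by $H^{1}(\pe^{3},\OO)=0$ and simple connectedness: the ambiguity in $\eta$ is by multiples of $\omega$, so the obstruction lives in cohomology of sheaves twisted by $\OO(\deg S)$ and $\OO(-\deg\F-2)$, and the whole mechanism hinges on the numerical identity $\deg S=\deg\F+2$. You declare this identity to be ``a byproduct'', but it is the essential input (without it $\omega/F$ need not even have empty zero divisor), and your argument never uses hypotheses (i) and (ii) in any substantive way to establish it.

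The paper proves exactly that identity, by a two-dimensional index computation rather than by global differential forms: cut by a generic plane $H$, set $\G=i^{*}\F$, $d=\deg\F$, $d_{0}=\deg S$; use the identity $\bb_{p}(\G)=\cs_{p}(\G)+2\,\gsv_{p}(\G)$ valid for non-dicritical second type foliations (this is where the second type hypothesis enters); sum $\cs$ and $\gsv$ along the invariant curve $S\cap H$ to get $2(d+2)d_{0}-d_{0}^{2}$ at the singularities on $S$ --- hypothesis (i) guarantees that all local separatrices there lie in $S\cap H$, so these are the \emph{total} indices; use $\bb\leq 0$ at the remaining singularities, which is where hypothesis (ii) is consumed; and compare with $\sum\bb=(d+2)^{2}$ to obtain $(d_{0}-(d+2))^{2}\leq 0$, hence $d_{0}=d+2$, after which Brunella's criterion yields that $\G$, and hence $\F$, is logarithmic. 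If you want to salvage your route, you must first prove $\deg S=\deg\F+2$; some index-theoretic counting of this kind appears unavoidable.
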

\begin{proof} We apply the  two-dimensional version of this result proved in \cite{mol2017}. Let
 $i: \pe^{2} \to \pe^{3}$ be a linear embedding generically transversal to $\F$, set
 $\G = i^{*}\F$ and identify   $\pe^{2}$ and the plane $H = i(\pe^{2})$.  Such a map exists by \cite{camacho1992}.
 Denoting by $d_{0} = \deg (S)$ and $d = \deg(\F)$, we also have $d_{0} = \deg (S \cap H)$ and $d = \deg(\G)$.
  Denote by $p_{1},\ldots,p_{r} \in \sing(\G)$ the  points
of intersection between $H$ and the components of  $\sing_{2}(\F)$ in $S$. At the other points in $\sing(\G)$,  say $q_{1},\ldots,q_{s}$,
the foliation $\G$ has local  holomorphic first integrals.
Note that $S \cap H$  contains all local separatrices of $\G$  at the points $p_{1},\ldots,p_{r}$. Our calculations  is based on the following fact \cite[Theorem I]{mol2017}: if $\G$ is a non-dicritical second type foliation, then $\bb_{p}(\G) =  \cs_{p}(\G) + 2  \gsv_{p}(\G)$,
where $\bb$ is the Baum-Bott index,  $\cs$ and $\gsv$ are, respectively, the total --- that is, with respect to the complete set of
 separatrices ---  Camacho-Sad and   G\'omez-Mont-Seade-Verjovski indices (see \cite{brunella1997} for definitions and properties of indices). Using known formulas for the sum of the $\cs$ and $\gsv$-indices along an invariant algebraic curve, we find
 \[
\begin{array}{rcl}
\displaystyle \sum_{i=1}^{r} \bb_{p_{i}}(\G) & = &\displaystyle \sum_{i=1}^{r} \cs_{p_{i}}(\G) + 2  \gsv_{p_{i}}(\G) \medskip \\
& = & d_{0}^{2} + 2 \left((d + 2)d_{0} - d_{0}^{2}\right) \medskip \\
& =  & 2  (d + 2)d_{0} - d_{0}^{2}.
\end{array}
\]
On the other hand,
$\bb_{q_{j}}(\G) \leq 0$ for all $j = 1, \ldots, s$, since  local holomorphic first integrals exist  \cite{fernandez2017}. Thus

\[\sum_{i=1}^{r} \bb_{p_{i}}(\G) \geq  \sum_{p \in  \sing (\G)} \bb_{p}(\G) = (d+2)^{2}.\]
Comparing these two expressions, we find
$(d_{0} - (d + 2))^{2} \leq 0$,  which is possible if and only if $d_{0} = (d + 2)$.
This  implies, by \cite{brunella1997}, that  $\G$ is a logarithmic foliation, giving that also
  $\F$ is  logarithmic.

\end{proof}

\bibliographystyle{plain}
\bibliography{referencias}

\medskip \medskip
\medskip \medskip
\noindent
Gilberto D. Cuzzuol \\
Departamento de Matem\'atica \\
Universidade Federal de Itajub\'a \\
Rua Irmã Ivone Drumond, 200\\
35903-087
Itabira - MG \\
BRAZIL \\
gilcuzzuol@unifei.edu.br

\medskip \medskip
\medskip \medskip
\medskip \medskip

\noindent
Rog\'erio S. Mol \\
Departamento de Matem\'atica \\
Universidade Federal de Minas Gerais \\
Av. Ant\^onio Carlos, 6627
\ C.P. 702 \\
30123-970 -
Belo Horizonte - MG \\
BRAZIL \\
rsmol@mat.ufmg.br

\end{document}